\theoremstyle{plain}
\newtheorem{theorem}{Theorem}[section]
\newtheorem{proposition}[theorem]{Proposition}
\newtheorem{corollary}[theorem]{Corollary}
\newtheorem{lemma}[theorem]{Lemma}
\theoremstyle{definition}
\newtheorem{example}[theorem]{Example}
\newcommand{\term}[1]{{\textit{\textbf{#1}}}}   
\newcommand{\Pelc}[1]{\left(\Sigma X_n\right)_p}
\DeclareSymbolFont{bbold}{U}{bbold}{m}{n}
\DeclareSymbolFontAlphabet{\mathbbold}{bbold}
\DeclareMathOperator{\orth}{Orth}
\renewcommand{\le}{\leqslant}
\begin{document}


\title[On diagonal of Riesz operators]{On the diagonal of Riesz operators on Banach lattices\\
\vskip.5cm
\small{D\lowercase{edicated to the memory of}  W\lowercase{im} L\lowercase{uxemburg}}}

\author{R. Drnov\v sek}
\address{Faculty of Mathematics and Physics, University of Ljubljana,
  Jadranska 19, 1000 Ljubljana, Slovenia \ \ \ and \ \ \
  Institute of Mathematics, Physics, and Mechanics, Jadranska 19, 1000 Ljubljana, Slovenia}
\email{roman.drnovsek@fmf.uni-lj.si}

\author{M. Kandi\'c}
\address{Faculty of Mathematics and Physics, University of Ljubljana,
  Jadranska 19, 1000 Ljubljana, Slovenia \ \ \ and \ \ \
  Institute of Mathematics, Physics, and Mechanics, Jadranska 19, 1000 Ljubljana, Slovenia}
\email{marko.kandic@fmf.uni-lj.si}

 \keywords{Vector lattices, Banach lattices, Riesz operators, diagonal of an operator}
 \subjclass[2020]{46A40, 46B42, 47B06, 47B07}

\date{\today}

\begin{abstract}
This paper extends the well-known Ringrose theory for compact operators to polynomially Riesz operators on Banach spaces. The particular case of an ideal-triangularizable Riesz operator on an order continuous Banach lattice yields that the spectrum of such operator lies on its diagonal, which motivates the systematic study of an abstract diagonal of a regular operator on an order complete vector lattice $E$. We prove that the class $\mathscr D$ of regular operators for which the diagonal coincides with the atomic diagonal is always a band in $\mathcal L_r(E)$, which contains the band of abstract integral operators. If $E$ is also a Banach lattice, then $\mathscr D$ contains positive Riesz operators.
\end{abstract}

\maketitle

\section{Introduction}
Although the spectral theory of Riesz operators is the same as that of compact operators, they behave differently. For example, the sum and the product of two Riesz operators can be something other than a Riesz operator, which is not the case for compact operators. The abundance of invariant subspaces for compact operators and Ringrose's theorem guarantee that every compact operator admits an abstract upper-triangular form which is a generalization of Schur's form from linear algebra locating the spectrum of a matrix on its diagonal. Since Riesz operators possibly do not have non-trivial invariant subspaces, they generally do not admit such forms. This paper aims first to develop Ringrose's theory (\Cref{Section: Ringrose}) for polynomially Riesz operators and then apply it to study some aspects of Riesz operators on Banach lattices. As a result, we obtain that the spectrum of an ideal-triangularizable Riesz operator can be realized as the set of its diagonal coefficients, which motivates the systematic study (\Cref{section:diagonals}) of the abstract diagonal of a given regular operator on an order complete vector lattice $E$. We prove that the set of all regular operators for which the diagonal agrees with the atomic diagonal is a band in
the order complete vector lattice of all regular operators $\mathcal L_r(E)$, which always contains the band of all abstract integral operators. If, in addition, $E$ is a Banach lattice, this set contains every positive Riesz operator.

\section{Preliminaries}\label{preliminaries}

Throughout this paper, $X$ will denote a complex Banach space and $\mathcal B(X)$ the Banach algebra of all bounded linear operators on $X$. The closed ideal of all compact operators on $X$ will be denoted by $\mathcal K(X)$. The spectrum of an element $a$ in a
complex Banach algebra $\mathcal A$ is denoted by $\sigma(a)$. The spectrum of a bounded linear operator $T\in \mathcal B(X)$ is the spectrum of $T$ considered as an element of the Banach algebra $\mathcal B(X)$. The spectral radius of $T$ is denoted by $\rho(T)$. By $\pi\colon \mathcal B(X)\to \mathcal B(X)/\mathcal K(X)$ we denote the canonical projection onto the
\term{Calkin algebra} $\mathcal B(X)/\mathcal K(X)$. The \term{essential spectrum} of $T \in \mathcal B(X)$ is defined by $\sigma_e(T) := \sigma(\pi(T))$.

Let $T\in \mathcal B(X)$ be a bounded linear operator on $X$. If $\pi(T)$ is a quasinilpotent element of the Calkin algebra, then $T$ is called a \term{Riesz operator}. Hence, $T$ is a Riesz operator if $\sigma_e(T)=\{0\}$. It should be noted that Dowson
\cite[Definition 3.4]{Dowson78} defines Riesz operators as those operators which share the spectral theory of compact operators. Aforementioned definitions of a Riesz operator are equivalent by \cite[Theorem 3.12]{Dowson78}. Hence, the class of Riesz operators on $X$ always contains the ideal $\mathcal K(X)$ of compact operators. Since the image of every power compact operator in the Calkin algebra is nilpotent, the class of Riesz operators also contains every power compact operator.
If there exists a non-zero complex polynomial $p$ such that $p(T)$ is a Riesz operator, then $T$ is called a \term{polynomially Riesz} operator. We refer the reader  to \cite{AA02} for more details about operators on Banach spaces.

The letter $E$ shall stand for an Archimedean vector lattice. By $E^\sim$ and $E_n^\sim$, we denote the order dual and the order continuous dual of $E$, respectively. If $E$ is a normed lattice, then the norm dual $E^*$ of $E$ is contained in $E^\sim$. If, in addition, $E$ is a Banach lattice, then $E^\sim=E^*$. Let $T\colon E\to F$ be a positive operator between vector lattices.
The \term{absolute kernel} of the operator $T$ is defined by $N_T  = \{x\in E:\; T|x|=0\}$.
It should be noted that $N_T$ is always an ideal of $E$. If $T$ is also order continuous, then $N_T$ is a band in $E$. The disjoint complement $C_T:=N_T^d$ of $N_T$ is called the \term{carrier} of $T$. A band $B$ in $E$ is a \term{projection band} if $E=B\oplus B^d.$ The band projection $P_B$ of $E$ onto the projection band $B$ is always an order continuous positive operator. An atom $a$ of a vector lattice $E$ is a positive vector $a$ with the following property: if $0\leq x,y\leq a$ and $x\perp y$, then $x=0$ or $y=0$. Since $E$ is Archimedean, the notion of an atom is equivalent to the notion of a discrete vector saying that the linear span of $a$ is a projection band in $E$. The \term{atomic part} $A$ of $E$ is the band generated by the set of all atoms in $E$. Its disjoint complement $C:=A^d$ is called the \term{non-atomic part} or the \term{continuous part} of $E$. If $E$ is order complete, then every band is a projection band which yields the order decomposition $E=A\oplus C$ of the vector lattice $E$.


Given $F \subseteq E^\sim$ a subspace, the symbol $E\otimes F$ denotes the vector space of all finite-rank operators which can be expressed as a finite sum of rank one operators of the form $x\otimes \varphi$ with $x \in E$ and $\varphi \in F$. The band generated by $E\otimes E_n^\sim$ in $\mathcal L_r(E)$ is called the band of \term{abstract integral operators}. If every band of $E$ is invariant under $T\colon E\to E$, then $T$ is a \term{band preserving} operator. Order bounded band preserving operators are called \term{orthomorphisms}. In general, band preserving operators are not order bounded. Abramovich, Veksler and Koldunov proved in \cite{AVK} (see also
\cite[Theorem 4.76]{Aliprantis:06})  that every band preserving operator on a Banach lattice is order bounded, and hence, an orthomorphism.   Since $E$ is an Archimedean vector lattice, the set of orthomorphisms $\orth(E)$ on $E$ is a vector lattice under pointwise algebraic and lattice operations (see \cite{BK, CD}). In fact, for each $T \in \orth(E)$ and for each vector $x\in E$ we have
\begin{equation}\label{modulus}
|T|(|x|)=\big|T|x|\big|=|Tx|.
\end{equation}
By \cite[Theorem 2.44]{Aliprantis:06} (see also \cite[Theorem 1.3]{LS78}) every orthomorphism on an Archimedean vector lattice is order continuous, and therefore, order bounded by \cite[Theorem 2.1]{AS05}. If, in addition, $E$ is order complete, then by \cite[Theorem 2.45]{Aliprantis:06} the vector lattice $\orth(E)$ is a band in $\mathcal L_r(E)$ generated by the identity operator. Since $\mathcal L_r(E)$ is also order complete, the band of orthomorphisms is a projection band in $\mathcal L_r(E)$. We refer the reader to \cite{AA02, Aliprantis:06, Luxemburg71} for more details about vector and Banach lattices and operators acting on them.

We say that a chain $\mathcal C$ of closed subspaces of a Banach space $X$ is a \term{complete} chain if it contains arbitrary intersections and closed linear spans of its members. If a closed subspace $\mathcal M$ is in a complete chain $\mathcal C$, then the {\term{predecessor}
$\mathcal M_{-}$ of $\mathcal M$ in $\mathcal C$ is defined as the closed linear span of all proper subspaces of $\mathcal M$ belonging to $\mathcal C$.
Every maximal chain $\mathcal C$ of closed subspaces of $X$ is complete and, for each subspace $\mathcal M$ in $\mathcal C$,
the dimension of the space quotient space $\mathcal M/\mathcal M_{-}$ is at most one.

A family $\mathcal F$ of operators on a Banach space $X$ is \term{reducible} if there exists a non-trivial closed subspace of $X$ that is invariant under every operator from $\mathcal F$. If there exists a maximal chain $\mathcal C$ of closed subspaces of $X$ such that every subspace from the chain $\mathcal C$ is invariant under every operator from $\mathcal F$, then $\mathcal F$ is said to be \term{triangularizable}, and $\mathcal C$ is called a
\term{triangularizing chain} for $\mathcal F$. A family $\mathcal F$ of operators on a Banach lattice $E$ is said to be \term{ideal-reducible} if there exists a non-trivial closed ideal of $E$ that is invariant under every operator from $\mathcal F$. Otherwise, we say that $\mathcal F$ is
\term{ideal-irreducible}.
A family  $\mathcal F$ of operators on a Banach lattice is said to be
\term{ ideal-triangularizable} if it is triangularizable and at least one of (possibly many) triangularizing chains of $\mathcal F$ consists of closed ideals of $E$. For a more detailed exposition on triangularizability we refer the reader to \cite{RaRo}.

\section{Ringrose type theorems for polynomially Riesz operators}\label{Section: Ringrose}

It is a well-known fact from linear algebra that every complex $n\times n$ matrix is similar to an upper-triangular matrix whose set of diagonal entries coincides with its spectrum.
Ringrose \cite{Ringrose} proved that the spectrum of a given compact operator $T$ on a complex Banach space could also be realized in terms of something resembling diagonal coefficients as follows. By \cite[Theorem 1]{Ringrose}, the operator $T$ is triangularizable. Pick any triangularizing chain $\mathcal C$ for $T$. Then for each $\mathcal M\in \mathcal C$, we have either $\mathcal M=\mathcal M_-$ or the quotient Banach space $\mathcal M/\mathcal M_-$ is one-dimensional. If $\mathcal M\neq \mathcal M_-$, then $T$ induces the operator $T_{\mathcal M}$ on the one-dimensional Banach space $\mathcal M/\mathcal M_-$ given as $T_{\mathcal M}(x+\mathcal M_-)=\lambda_{\mathcal M}(x+\mathcal M_-)$ for any $x\in \mathcal M\setminus \mathcal M_-$. Hence, $T_{\mathcal M}$ acts on $\mathcal M/\mathcal M_-$ as a multiplication by the uniquely determined number $\lambda_{\mathcal M}$ which is called the \term{diagonal coefficient} of $T$ with respect to $\mathcal M$. Ringrose proved (see \cite[Theorem 2]{Ringrose}) that
$$\sigma(T)\setminus \{0\}=\{\lambda_{\mathcal M}:\; \mathcal M\neq \mathcal M_-\}\setminus \{0\}.$$ Furthermore, the diagonal multiplicity of $\lambda\in \sigma(T)\setminus \{0\}$ agrees with its algebraic multiplicity.
Konvalinka extended Ringrose's theorem (see \cite[Theorem 2.7]{Konvalinka}) to polynomially compact operators. It should be noted that Konvalinka \cite{Konvalinka} did not consider the part of Ringrose's theorem regarding multiplicities.

In this section, we first extend Ringrose's theorem to the class of polynomially Riesz operators and then obtain a version for ideal-triangularizable Riesz operators on order continuous Banach lattices. Before we proceed to the main results concerning Ringrose's theory for polynomially Riesz operators, we extend the well-known spectral theory of Riesz operators (see \cite{AA02}) to polynomially Riesz operators, which will be needed in the paper. Although the results should be known, we include our proofs for the sake of the reader.

\begin{proposition}\label{Minimal polynomial polynomially Riesz}
Let $T$ be a polynomially Riesz operator on an infinite-dimensional Banach space. Then there exists the unique monic polynomial $m$ of the smallest degree such that $m(T)$ is a Riesz operator and that $m$ divides every polynomial $p$ for which $p(T)$ is a Riesz operator.
\end{proposition}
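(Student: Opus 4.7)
The plan is to consider the set
$$\mathcal{I} = \{p \in \mathbb{C}[x] : p(T) \text{ is Riesz}\}$$
and show that it is a proper, nonzero ideal of the principal ideal domain $\mathbb{C}[x]$. Its unique monic generator will then be the polynomial $m$ of the statement, because in $\mathbb{C}[x]$ the generator of a nonzero proper ideal is, up to a unit, an element of smallest degree and divides every other member.

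That $\mathcal{I}$ is nonzero is precisely the hypothesis that $T$ is polynomially Riesz. The key step is showing that $\mathcal{I}$ is closed under addition and under multiplication by an arbitrary polynomial. I would argue this inside the Calkin algebra: $p \in \mathcal{I}$ iff $\pi(p(T)) = p(\pi(T))$ is quasinilpotent. All elements of $\mathbb{C}[\pi(T)]$ commute pairwise, and in any Banach algebra commuting elements $a, b$ satisfy $\rho(a+b) \le \rho(a) + \rho(b)$ and $\rho(ab) \le \rho(a)\rho(b)$. Applied with $a = \pi(p(T))$, $b = \pi(q(T))$, this yields $p+q \in \mathcal{I}$ for any $p,q \in \mathcal{I}$, and $qp \in \mathcal{I}$ for any $p \in \mathcal{I}$ and any $q \in \mathbb{C}[x]$.

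Properness is where the infinite-dimensionality of $X$ enters: because $\mathcal{K}(X) \neq \mathcal{B}(X)$, the Calkin algebra is a nonzero unital Banach algebra, so the image $\pi(\mathrm{Id}_X)$ of the identity is its unit and has spectral radius one; hence the constant polynomial $1$ is not in $\mathcal{I}$, so $\mathcal{I} \neq \mathbb{C}[x]$. Since $\mathbb{C}[x]$ is a principal ideal domain, $\mathcal{I} = (m)$ for a unique monic polynomial $m$ of positive degree, which has all the required properties. The only real subtlety is closure of $\mathcal{I}$ under sums, which genuinely exploits the commutativity inside $\mathbb{C}[\pi(T)]$; this is delicate because, as the paper notes in its introduction, the sum of two arbitrary Riesz operators need not be Riesz.
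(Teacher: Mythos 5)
Your proposal is correct and is essentially the paper's argument in ideal-theoretic packaging: the paper carries out the Euclidean division $p = km + r$ explicitly and uses the same two Calkin-algebra facts (a product of commuting elements with a quasinilpotent factor is quasinilpotent, and a difference of commuting quasinilpotents is quasinilpotent) to force $r = 0$, which is exactly your verification that $\mathcal{I}$ is an ideal followed by the PID property of $\mathbb{C}[x]$. Your extra observation that $\mathcal{I}$ is proper, so that $m$ has positive degree, is a correct use of the infinite-dimensionality hypothesis that the paper leaves implicit.
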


\begin{proof}
  It should be clear that there exists at least one monic polynomial $p$ such that $p(T)$ is a Riesz operator. Let $m$ be such a polynomial with the smallest possible degree. Let $p$ be any polynomial for which $p(T)$ is a Riesz operator. Then there exist polynomials $k$ and $r$ such that the degree of $r$ is strictly smaller than that of $m$ and that $p=km+r$. Suppose that $r$ is non-zero. Since $k(T)$ and $m(T)$ commute, the images of $p(T)$ and $k(T)m(T)$ in the Calkin algebra are quasinilpotent elements. Furthermore, since $p(T)$ and $k(T)m(T)$ commute, the equality $r(T)=p(T)-k(T)m(T)$ together with the same argument as above shows that $r(T)$ is a Riesz operator. This is in contradiction with the minimality of the degree of $m$.
\end{proof}

The polynomial $m$ from Proposition \ref{Minimal polynomial polynomially Riesz} is called the {\it minimal polynomial} of $T$.

\begin{proposition}\label{Essential spectrum Riesz operator}
Let $T$ be a polynomially Riesz operator on a Banach space $X$. Let $m(z)=(z-\lambda_1)^{j_1}\cdots (z-\lambda_k)^{j_k}$ be the minimal polynomial of $T$. Then
$$ \sigma_{e}(T)=m^{-1}(\{0\}) = \{\lambda_1, \ldots, \lambda_k\} . $$
Each $\lambda \in \sigma(T)\setminus \sigma_e(T)$ is an isolated point of $\sigma(T)$, it is an eigenvalue of $T$ and is a pole of the resolvent of $T$. Furthermore, the spectral projection $P_{\lambda}(T)$ is of finite rank, and
the operator $\lambda-T$ has finite ascent and descent.
 \label{polyRiesz}
\end{proposition}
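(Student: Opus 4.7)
The plan is to leverage that $\pi(m(T))=m(\pi(T))$ is quasinilpotent in the Calkin algebra, and then to transfer everything known about the Riesz operator $m(T)$ back to $T$ via the ordinary spectral mapping theorem. The inclusion $\sigma_e(T)\subseteq m^{-1}(\{0\})$ is immediate: applying the spectral mapping theorem inside the Calkin algebra gives $m(\sigma_e(T))=\sigma(m(\pi(T)))=\{0\}$. For the reverse inclusion I would argue by contradiction. If some root $\lambda_i$ fails to lie in $\sigma_e(T)$, then $\pi(T-\lambda_i)$ is invertible in the Calkin algebra, and so is its $j_i$-th power. Writing $m(z)=(z-\lambda_i)^{j_i}\tilde m(z)$ with $\deg\tilde m<\deg m$, the commuting elements $\pi((T-\lambda_i)^{j_i})$ and $\pi(\tilde m(T))$ have product $\pi(m(T))$, which is quasinilpotent. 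Multiplying by $\pi(T-\lambda_i)^{-j_i}$ expresses $\pi(\tilde m(T))$ as a commuting product of an invertible element and a quasinilpotent element, hence itself quasinilpotent. Thus $\tilde m(T)$ is a Riesz operator, contradicting the minimality of $m$ given by Proposition~\ref{Minimal polynomial polynomially Riesz}.

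Now fix $\lambda\in\sigma(T)\setminus\sigma_e(T)$, so that $m(\lambda)\neq 0$ and $m(\lambda)\in\sigma(m(T))$ by the usual spectral mapping theorem in $\mathcal B(X)$. Because $m(T)$ is Riesz, $m(\lambda)$ is an isolated eigenvalue and a pole of the resolvent of $m(T)$, whose spectral projection $Q:=P_{m(\lambda)}(m(T))$ has finite-dimensional range $Y$. Since $Q$ commutes with $T$, the decomposition $X=Y\oplus(I-Q)X$ is $T$-invariant, and on $(I-Q)X$ the operator $m(T)-m(\lambda)$ is invertible. Factoring $m(z)-m(\lambda)=c\prod_s(z-\nu_s)^{m_s}$ with $\lambda$ among the $\nu_s$, the invertibility of the commuting product forces each factor $T-\nu_s$, in particular $T-\lambda$, to be invertible on $(I-Q)X$. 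Therefore $\lambda$ enters $\sigma(T)$ only through the finite-dimensional block $T|_Y$. From here, isolation of $\lambda$ in $\sigma(T)$, the finite rank of $P_\lambda(T)$ (identified as the spectral projection of $T|_Y$ at $\lambda$), the pole property, and finite ascent and descent, all equal to the nilpotency index of $(\lambda-T)|_{\mathrm{range}\,P_\lambda(T)}$, drop out from the classical finite-dimensional spectral decomposition applied to $T|_Y$.

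The only place where I anticipate any friction is the short Banach-algebra step in the first paragraph, namely that when a commuting product is quasinilpotent and one factor is invertible, the other factor must itself be quasinilpotent. This is routine via $\rho(ab)\le\rho(a)\rho(b)$ for commuting elements, but it needs to be deployed cleanly to avoid a non-commutative pitfall, as the minimal-polynomial argument collapses without it. Everything else is bookkeeping layered on top of the classical spectral theory of Riesz operators.
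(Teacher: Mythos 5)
Your proof is correct. For the identification $\sigma_e(T)=m^{-1}(\{0\})$ you argue exactly as the paper does: spectral mapping in the Calkin algebra gives one inclusion, and for the other you observe that if $\pi(T-\lambda_i)$ were invertible one could divide it out of $\pi(m(T))$ to make a strictly lower-degree polynomial evaluate to a Riesz operator, contradicting Proposition~\ref{Minimal polynomial polynomially Riesz}; the paper strips a single linear factor $z-\lambda$ rather than the full power $(z-\lambda_i)^{j_i}$, an immaterial difference, and both rely on the same commuting-product spectral radius estimate you flag. The two arguments diverge in the second half. The paper disposes of the assertions about $\lambda\in\sigma(T)\setminus\sigma_e(T)$ by citation: isolation follows from \cite[Lemma 7.43]{AA02} because $\lambda$ can be joined to the resolvent set by a path avoiding the finite set $\sigma_e(T)$, and the eigenvalue, pole, finite-rank and ascent--descent statements are quoted from \cite[Theorem 7.44]{AA02}. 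You instead give a self-contained reduction: take the finite-rank spectral projection $Q=P_{m(\lambda)}(m(T))$ of the Riesz operator $m(T)$ at $m(\lambda)\neq 0$, note that $Q$ commutes with $T$, factor $m(z)-m(\lambda)$ and use that an invertible commuting product has invertible factors to conclude that $T-\lambda$ is invertible on $(I-Q)X$, and then read everything off the finite-dimensional block $T|_{QX}$. Your route costs more writing but replaces the Fredholm-theoretic input of \cite[Theorem 7.44]{AA02} with elementary Riesz theory for $m(T)$ plus finite-dimensional linear algebra; both are sound, and your only loose end (spelling out that isolation also needs openness of the invertibles to exclude nearby $\mu$ from $\sigma(T|_{(I-Q)X})$) is routine.
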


\begin{proof}
  The essential spectrum of the operator $T$ is the spectrum of the element $T+\mathcal K(X)$ in the Calkin algebra $\mathcal B(X)/\mathcal K(X)$. Since the operator $m(T)$ is a Riesz operator, the element $m(T)+\mathcal K(X)$ in the Calkin algebra is quasinilpotent. From the spectral mapping for polynomials, we conclude that
  $$m(\sigma(T+\mathcal K(X))=\sigma(m(T+\mathcal K(X))=\sigma(m(T)+\mathcal K(X))=\{0\}$$ from where it follows that
  $$\sigma_e(T)=\sigma(T+\mathcal K(X))\subseteq m^{-1}(\{0\}).$$
  Suppose that $\sigma_e(T)$ is a proper subset of $m^{-1}(\{0\})$. Then there exists a root $\lambda$ of the polynomial $m$ such that the image of the operator $T-\lambda I$ is invertible in the Calkin algebra. If we write $m(z)-m(\lambda)=(z-\lambda)q(z)$, we have
  $$m(T)=m(T)-m(\lambda)I=(T-\lambda I)q(T)$$ from where we conclude $\pi(q(T))=(\pi(T-\lambda I))^{-1}\pi(m(T))$. Since $\pi((T-\lambda I))^{-1}$ and $\pi(m(T))$ commute, and since the image of $\pi(m(T))$ is quasinilpotent, the image of the operator $q(T)$ is quasinilpotent in Calkin algebra as well. Hence, $q(T)$ is a Riesz operator. This is in contradiction with the fact that $m$ is the minimal polynomial of $T$. This proves that $\sigma_{e}(T)=m^{-1}(\{0\})$.

Now, take $\lambda \in \sigma(T)\setminus \sigma_e(T)$.
By \cite[Lemma 7.43]{AA02}, $\lambda$ is an isolated point of $\sigma(T)$ since it can be joined with any point of the
resolvent set of $T$ by a path lying outside a finite set $\sigma_{e}(T)$. The remaining assertions follow from \cite[Theorem 7.44]{AA02}.
\end{proof}

The following corollary follows from Proposition \ref{polyRiesz} in a similar way as \cite[Theorem 7.48]{AA02} gives \cite[Corollaries 7.50 and 7.51]{AA02}.

\begin{corollary}
The spectrum of a polynomially Riesz operator is at most countable, and
its resolvent set is connected.
\end{corollary}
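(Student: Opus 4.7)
The plan is to read off both assertions from \Cref{polyRiesz} by an elementary planar-topology argument, mirroring the proofs of \cite[Corollaries 7.50 and 7.51]{AA02}. By \Cref{polyRiesz} we have the decomposition
$$\sigma(T) = \{\lambda_1,\ldots,\lambda_k\} \cup \bigl(\sigma(T) \setminus \sigma_e(T)\bigr),$$
where the first part is finite and every element of the second part is an isolated point of $\sigma(T)$.

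For countability I would introduce, for each $n \in \mathbb{N}$, the set
$$S_n = \bigl\{\lambda \in \sigma(T) \setminus \sigma_e(T) : \mathrm{dist}(\lambda, \sigma_e(T)) \geq 1/n\bigr\},$$
and argue that $S_n$ must be finite. Indeed, $S_n$ is bounded (as a subset of the compact set $\sigma(T)$), and any accumulation point $\mu$ of $S_n$ in $\mathbb{C}$ would lie in the closed set $\sigma(T)$, yet could lie neither in $\sigma_e(T)$ (since $\mathrm{dist}(\mu,\sigma_e(T)) \geq 1/n$) nor in $\sigma(T) \setminus \sigma_e(T)$ (every point there is isolated in $\sigma(T)$). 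Hence $S_n$ is finite and $\sigma(T) \setminus \sigma_e(T) = \bigcup_{n \geq 1} S_n$ is at most countable, which, combined with the finiteness of $\sigma_e(T)$, yields the first assertion.

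For the connectedness of $\rho(T) = \mathbb{C} \setminus \sigma(T)$, I would fix $z_1, z_2 \in \rho(T)$ and first join them by a polygonal path $\gamma$ lying in $\mathbb{C} \setminus \sigma_e(T)$, which is possible because $\sigma_e(T)$ is a finite set of points and so its complement is open and connected. The image of $\gamma$ is compact and disjoint from $\sigma_e(T)$, hence stays at a positive distance from $\sigma_e(T)$; since every accumulation point of $\sigma(T) \setminus \sigma_e(T)$ belongs to $\sigma_e(T)$, the path $\gamma$ can meet $\sigma(T) \setminus \sigma_e(T)$ only in finitely many points, each of them an isolated point of $\sigma(T)$. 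Replacing $\gamma$ near each such offending point by a small circular detour entirely within $\rho(T)$ produces a path in $\rho(T)$ joining $z_1$ and $z_2$.

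No step is genuinely hard; the subtlest point is the compactness-based reduction of the \emph{a priori} countable set of obstructions along $\gamma$ to a finite set, which is the exact mechanism underlying \cite[Corollary 7.51]{AA02}.
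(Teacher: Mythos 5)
Your proof is correct and follows exactly the route the paper intends: the paper gives no written proof, stating only that the corollary follows from \Cref{polyRiesz} ``in a similar way as \cite[Theorem 7.48]{AA02} gives \cite[Corollaries 7.50 and 7.51]{AA02}'', and your argument supplies precisely those details (finiteness of each $S_n$ via isolatedness of points off the finite essential spectrum, and the detour construction for connectedness of the resolvent set).
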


Now we are finally able to extend the well-known Ringrose's theorem for block triangularizations (see \cite[Theorem 7.2.7]{RaRo}) to the class of polynomially Riesz operators.

\begin{theorem} \label{riesz trikot haha}
	Let $T$ be a polynomially Riesz operator on a Banach space $X$. Then for every complete chain $\mathcal C$ of closed invariant subspaces of the operator $T$, we have
	$$ \sigma(T)=\sigma_e(T) \cup \bigcup \{\sigma(T_{\mathcal M}):\; \mathcal M\in \mathcal C,\, \mathcal M_-\neq \mathcal M\}. $$
\end{theorem}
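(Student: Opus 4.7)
The plan is to establish the two inclusions separately, both relying on the following preparatory observation: polynomially Riesz operators are stable under restriction to and quotient by closed invariant subspaces, with essential spectra contained in $\sigma_e(T)$. Using the minimal polynomial of \Cref{Minimal polynomial polynomially Riesz}, this reduces to the analogous statement for Riesz operators, which follows from the monotonicity of the essential spectral radius under these operations (compact operators restrict to and descend as compact operators). In particular, the induced operator $T_\mathcal{M}$ on $\mathcal{M}/\mathcal{M}_-$ is polynomially Riesz with $\sigma_e(T_\mathcal{M})\subseteq\sigma_e(T)$; moreover, for every closed $T$-invariant $Y$ one has $\sigma(T|_Y)\subseteq\sigma(T)$, because $\sigma_e(T|_Y)\subseteq\sigma_e(T)\subseteq\sigma(T)$ and points of $\sigma(T|_Y)\setminus\sigma_e(T|_Y)$ are, by \Cref{Essential spectrum Riesz operator}, isolated eigenvalues of $T|_Y$, hence eigenvalues of $T$.

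For the inclusion $\sigma_e(T)\cup\bigcup\sigma(T_\mathcal{M})\subseteq\sigma(T)$, take $\mu\in\sigma(T_\mathcal{M})\setminus\sigma_e(T)$ with $\mathcal{M}\neq\mathcal{M}_-$ and suppose toward contradiction that $\mu\notin\sigma(T)$. By the preparatory observation and \Cref{Essential spectrum Riesz operator}, $\mu$ is an isolated eigenvalue of $T_\mathcal{M}$, so there is $x\in\mathcal{M}\setminus\mathcal{M}_-$ with $y:=(T-\mu I)x\in\mathcal{M}_-$. Since $\sigma(T|_{\mathcal{M}_-})\subseteq\sigma(T)$, we have $\mu\notin\sigma(T|_{\mathcal{M}_-})$, so there exists $z\in\mathcal{M}_-$ with $(T-\mu I)z=y$; then $(T-\mu I)(x-z)=0$ and the invertibility of $T-\mu I$ on $X$ forces $x=z\in\mathcal{M}_-$, contradicting $x\notin\mathcal{M}_-$.

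For the reverse inclusion $\sigma(T)\subseteq\sigma_e(T)\cup\bigcup\sigma(T_\mathcal{M})$, take $\mu\in\sigma(T)\setminus\sigma_e(T)$. By \Cref{Essential spectrum Riesz operator} there is a finite-rank Riesz projection $P_\mu$ onto a finite-dimensional spectral subspace $E_\mu$ on which $T-\mu I$ is nilpotent. First, $P_\mu$ preserves every $\mathcal{M}\in\mathcal{C}$: writing $P_\mu=\frac{1}{2\pi i}\oint_\gamma(zI-T)^{-1}\,dz$ with $\gamma$ a small contour around $\mu$ inside the resolvent set of $T$, the inclusion $\sigma(T|_\mathcal{M})\subseteq\sigma(T)$ established above implies that for each $z\in\gamma$, $(zI-T|_\mathcal{M})^{-1}$ exists and coincides with $(zI-T)^{-1}|_\mathcal{M}$. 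Next, let $\mathcal{M}_0\in\mathcal{C}$ be the smallest member of the chain containing $E_\mu$, available by completeness of $\mathcal{C}$. For any $\mathcal{N}\in\mathcal{C}$ with $\mathcal{N}\subsetneq\mathcal{M}_0$, minimality gives $E_\mu\not\subseteq\mathcal{N}$, so $P_\mu(\mathcal{N})\subsetneq E_\mu$; a dimension count in the finite-dimensional $E_\mu$ combined with continuity of $P_\mu$ yields $P_\mu(\mathcal{M}_{0,-})\subsetneq E_\mu$, and in particular $\mathcal{M}_0\neq\mathcal{M}_{0,-}$. Finally, pick $w\in E_\mu\setminus P_\mu(\mathcal{M}_{0,-})$, so that $w\in\mathcal{M}_0\setminus\mathcal{M}_{0,-}$; letting $j$ be the largest index with $(T-\mu I)^j w\notin\mathcal{M}_{0,-}$ (which exists since $T-\mu I$ is nilpotent on $E_\mu$), the image of $(T-\mu I)^j w$ in $\mathcal{M}_0/\mathcal{M}_{0,-}$ is a nonzero eigenvector of $T_{\mathcal{M}_0}$ with eigenvalue $\mu$, giving $\mu\in\sigma(T_{\mathcal{M}_0})$.

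The main obstacle lies in the reverse inclusion and is the need to coordinate three threads: verifying the chain-invariance of $P_\mu$ via the resolvent contour, locating the correct link $\mathcal{M}_0$ through the finite-dimensional jump of $P_\mu$ along $\mathcal{C}$, and extracting an honest eigenvector in $\mathcal{M}_0/\mathcal{M}_{0,-}$ from the merely nilpotent action of $T-\mu I$ on $E_\mu$.
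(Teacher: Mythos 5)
Your proof is correct, but it follows a genuinely different route from the paper's in both inclusions. For the inclusion $\sigma(T_{\mathcal M})\setminus\sigma_e(T)\subseteq\sigma(T)$, the paper argues that $T_{\mathcal M}-\lambda I$ fails to be surjective on $\mathcal M/\mathcal M_-$ (via the finite ascent/descent decomposition), deduces that $T|_{\mathcal M}-\lambda I$ is not surjective on $\mathcal M$, and then runs the ascent/descent argument again on the restriction to conclude non-injectivity; you instead lift a quotient eigenvector directly, solving $(T-\mu I)z=y$ inside $\mathcal M_-$ using the spectral inclusion $\sigma(T|_{\mathcal M_-})\subseteq\sigma(T)$ and then invoking injectivity of $T-\mu I$ on $X$ --- a slightly more elementary contradiction that uses \Cref{Essential spectrum Riesz operator} only once. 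For the reverse inclusion, the paper exploits compactness of the set of normalized $\lambda$-eigenvectors and the finite intersection property to locate the minimal chain member meeting that set, and then cites Konvalinka's descent computation to show $\mathcal M_-\subseteq\mathrm{im}(\lambda I-T)^N$; you instead work with the finite-rank Riesz projection $P_\mu$, verify its chain-invariance through the resolvent contour integral, take the smallest chain member containing the spectral subspace $E_\mu$, and finish with a dimension count on the nested proper subspaces $P_\mu(\mathcal N)\subsetneq E_\mu$ followed by extracting an eigenvector of the quotient from the nilpotent action of $T-\mu I$ on $E_\mu$. Your version is more self-contained (it avoids the appeal to Konvalinka's argument and to the compactness of the eigenvector sphere) at the price of having to justify that $(zI-T)^{-1}$ leaves each $\mathcal M$ invariant and that a nested union of proper subspaces of a finite-dimensional space is proper; both proofs ultimately rest on the same two ingredients, namely the divisibility of minimal polynomials from \Cref{Minimal polynomial polynomially Riesz} and the Fredholm structure of $\sigma(T)\setminus\sigma_e(T)$ from \Cref{Essential spectrum Riesz operator}.
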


\begin{proof}
	Suppose that $\lambda \in \sigma(T_{\mathcal M}) \setminus \sigma_e(T)$ for some $\mathcal M\in \mathcal C$ satisfying $\mathcal M\neq \mathcal M_-.$ It follows from \cite[Theorem 3.23]{Dowson78} that the operator $T_{\mathcal M}$ is a polynomially Riesz operator on $\mathcal M/\mathcal M_-$. Note that by \Cref{Minimal polynomial polynomially Riesz} the minimal polynomial of $T_{\mathcal M}$ divides
the minimal polynomial $m$ of $T$, and so $\sigma_e(T_{\mathcal M}) \subseteq \sigma_e(T)$ by \Cref{Essential spectrum Riesz operator}.

	 We claim that $T_{\mathcal M}-\lambda I$ is not surjective on $\mathcal M/\mathcal M_-$.
Since $\sigma_e(T_{\mathcal M}) \subseteq \sigma_e(T)$, we have
$\lambda \in \sigma(T_{\mathcal M}) \setminus \sigma_e(T_{\mathcal M})$, and the operator $T_{\mathcal M}-\lambda I$ has a finite ascent and a finite descent. \cite[Lemma 2.21]{AA02} implies that its ascent and descent are the same (we denote them by $p\in\mathbb N$), and also the following equality holds
	 $$\mathcal M/\mathcal M_-=\ker(T_{\mathcal M}-\lambda I)^p \oplus \mathrm{im}(T_{\mathcal M}-\lambda I)^p.$$
	If $T_{\mathcal M}-\lambda I$ is surjective, then $p=1$, so that $\ker(T_{\mathcal M}-\lambda I)=\{0\}$. This implies that $T_{\mathcal M}-\lambda I$ is also injective, which is a contradiction since $\lambda\in \sigma(T_{\mathcal M}).$

 Therefore, the operator $T|_{\mathcal M}-\lambda I$ also cannot be surjective. Since the operator $T|_{\mathcal M}$ is also a polynomially Riesz operator by \cite[Theorem 3.21]{Dowson78},  similarly, as above, we can see that $T|_{\mathcal M}-\lambda I$ is not injective. This implies that $\lambda$ is in the point spectrum of the operator $T$.
	
	Choose $\lambda\in \sigma(T) \setminus \sigma_e(T)$. Since the spectral projection $P_\lambda(T)$ is of finite-rank, the kernel $\ker (T-\lambda I)$ which is contained in the range of $P_\lambda(T)$ is finite-dimensional, so that the set
	$$\mathcal O=\{x\in X:\; Tx=\lambda x, \|x\|=1\}$$ is a compact subset of $X$. By the finite intersection property of compact sets, the subspace
	$$\mathcal M=\bigcap\{\mathcal N:\, \mathcal N\cap \mathcal O\neq \emptyset,\, \mathcal N\in\mathcal C\}$$
has a non-empty intersection with $\mathcal O$, and so it is non-trivial. Since $\mathcal C$ is complete, we have $\mathcal M\in\mathcal C$.

We claim that $\mathcal M\neq \mathcal M_-$ and $\lambda\in \sigma(T_{\mathcal M}).$ For every proper subspace $\mathcal L\in \mathcal C$ of $\mathcal M$ we can prove similarly as in \cite[Theorem 2.7]{Konvalinka}
	that for the descent $N$ of $T-\lambda I$ we have $\mathcal L\subseteq \mathrm{im}(\lambda I-T)^N.$ Since $\mathcal M_-$ is spanned by proper subspaces of $\mathcal M$, it follows $\mathcal M_-\subseteq \mathrm{im}(\lambda I-T)^N$. Since the intersection of $\mathcal M$ with $\ker(\lambda I-T)^N$ is non-trivial, we have $\mathcal M\neq \mathcal M_-$. If $x\in \mathcal M\backslash\mathcal M_-$ satisfies $Tx=\lambda x$, then $T_{\mathcal M}(x+\mathcal M_-)=\lambda(x+\mathcal M_-)$, so that $\lambda\in \sigma(T_{\mathcal M}).$
\end{proof}

The following example shows that in Theorem \ref{riesz trikot haha} we cannot omit the assumption that $T$ is a polynomially Riesz operator.

\begin{example}
Let $T$ be the adjoint of the shift operator on $\ell^2$, that is,
$$T(x_1, x_2, x_3, \ldots) = (x_2, x_3, x_4, \ldots).$$
Let $\{e_k\}_{k \in \mathbb N}$ be the standard basis of $\ell^2$.
For each $n \in \mathbb N$ let ${\mathcal M}_n$ be the linear span of vectors $e_1$, $e_2$, $\ldots$, $e_n$.
Set also ${\mathcal M}_0 = \{0\}$. Then
$$ \mathcal C = \{ \mathcal M_n : n \in \mathbb N \cup \{0\} \} \cup \{\ell^2\} $$
is a complete chain of closed invariant subspaces for the operator $T$.
Since $T(\mathcal M_n) = \mathcal M_{n-1}$,  we have $T_{\mathcal M_n} = 0$.
Therefore,
$$\sigma_e(T) \cup \bigcup \{\sigma(T_{\mathcal M}):\; \mathcal M\in \mathcal C,\, \mathcal M_-\neq \mathcal M\}$$
is equal to $\{\lambda \in \mathbb C : | \lambda | = 1\} \cup \{0\}$, while $\sigma(T)$ is equal to the closed unit disk
$\{\lambda \in \mathbb C : | \lambda | \le 1\}$.
\end{example}

The following corollary is an immediate consequence of Theorem \ref{riesz trikot haha}.

\begin{corollary}[Ringrose's theorem for polynomially Riesz operators]\label{cor to main 1}
Let $T$ be a triangularizable polynomially Riesz operator on a Banach space $X$. Then for every triangularizing chain
$\mathcal C$ of the operator $T$ we have
$$\sigma(T)=\sigma_e(T) \cup \{\lambda_{\mathcal M}:\, \mathcal M\in \mathcal C,\, \mathcal M\neq \mathcal M_-\}.$$
\end{corollary}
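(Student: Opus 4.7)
The plan is to derive this corollary directly from Theorem \ref{riesz trikot haha} by exploiting two facts about triangularizing chains that are already recorded in the preliminaries. First, any triangularizing chain $\mathcal{C}$ is a maximal chain of closed subspaces of $X$, and the preliminaries state that every maximal chain is automatically complete; so $\mathcal{C}$ is a complete chain of closed $T$-invariant subspaces, which is exactly the hypothesis needed to apply Theorem \ref{riesz trikot haha}.

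Second, I would invoke the statement from the preliminaries that for a subspace $\mathcal{M}$ in a maximal chain, $\dim(\mathcal{M}/\mathcal{M}_{-}) \le 1$. Hence whenever $\mathcal{M} \neq \mathcal{M}_{-}$, the quotient $\mathcal{M}/\mathcal{M}_{-}$ is one-dimensional, and the induced operator $T_{\mathcal{M}}$ is scalar multiplication by the diagonal coefficient $\lambda_{\mathcal{M}}$ (as defined in the discussion preceding Theorem \ref{riesz trikot haha}). Consequently $\sigma(T_{\mathcal{M}}) = \{\lambda_{\mathcal{M}}\}$.

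Substituting this identification into the formula furnished by Theorem \ref{riesz trikot haha}, namely
$$\sigma(T) = \sigma_{e}(T) \cup \bigcup\{\sigma(T_{\mathcal{M}}) : \mathcal{M}\in\mathcal{C},\, \mathcal{M}_{-} \neq \mathcal{M}\},$$
immediately yields
$$\sigma(T) = \sigma_{e}(T) \cup \{\lambda_{\mathcal{M}} : \mathcal{M}\in\mathcal{C},\, \mathcal{M} \neq \mathcal{M}_{-}\},$$
which is the desired equality. There is no real obstacle here; the corollary is essentially a bookkeeping step that translates the block-triangular form of Theorem \ref{riesz trikot haha} into a genuine diagonal form once the chain is maximal. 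The only tiny subtlety worth flagging explicitly is ensuring that $\sigma_{e}(T)$ appears on the right-hand side — it cannot in general be absorbed into the diagonal coefficients, as already foreshadowed by the example of the backward shift preceding this corollary.
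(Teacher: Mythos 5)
Your proposal is correct and matches the paper's intent exactly: the paper presents this corollary as an immediate consequence of Theorem \ref{riesz trikot haha}, and your argument (maximal chains are complete, the nontrivial quotients are one-dimensional, so $\sigma(T_{\mathcal M})=\{\lambda_{\mathcal M}\}$) is precisely the bookkeeping the authors leave to the reader. Nothing is missing.
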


Positive ideal-triangularizable Riesz operators have positive spectra.

\begin{corollary}
Let $T$ be an ideal-triangularizable positive operator on a Banach lattice $E$.
\begin{enumerate}
\item[(i)] If $T$ is a polynomially Riesz operator, then $\sigma(T) \setminus \sigma_e (T) \subseteq [0, \rho (T)]$.
\item[(ii)] If $T$ is a Riesz operator, then $\sigma(T) \subseteq [0, \rho (T)]$.
\end{enumerate}
\end{corollary}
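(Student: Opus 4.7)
The plan is to combine the Ringrose-type theorem for polynomially Riesz operators (Corollary \ref{cor to main 1}) with the elementary observation that, along an \emph{ideal}-triangularization, all diagonal coefficients are automatically nonnegative and bounded above by the spectral radius.

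First I would fix an ideal-triangularizing chain $\mathcal C$ for $T$, each member of which is a closed ideal of $E$. For any $\mathcal M\in\mathcal C$, the predecessor $\mathcal M_-$ is by definition the closed linear span of the proper members of $\mathcal C$ contained in $\mathcal M$; since the closed linear span of a family of closed ideals is again a closed ideal, $\mathcal M_-$ is itself a closed ideal of $E$. Hence $\mathcal M/\mathcal M_-$ carries a canonical Banach lattice structure, and because $T$ is positive and both $\mathcal M$ and $\mathcal M_-$ are $T$-invariant, the induced operator $T_\mathcal M$ on the quotient is again positive. Whenever $\mathcal M\neq\mathcal M_-$ the quotient is one-dimensional, so $T_\mathcal M$ acts by multiplication by the scalar $\lambda_\mathcal M$; positivity on a one-dimensional Banach lattice immediately forces $\lambda_\mathcal M\ge 0$. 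Moreover, $\lambda_\mathcal M\in\sigma(T_\mathcal M)\subseteq\sigma(T)$ by Theorem \ref{riesz trikot haha}, so $\lambda_\mathcal M\le\rho(T)$. Every diagonal coefficient along $\mathcal C$ therefore lies in $[0,\rho(T)]$.

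Combining this with Corollary \ref{cor to main 1} gives
$$\sigma(T)\setminus\sigma_e(T)\subseteq\{\lambda_\mathcal M:\mathcal M\in\mathcal C,\ \mathcal M\neq\mathcal M_-\}\subseteq[0,\rho(T)],$$
which is (i). For (ii), if $T$ is a Riesz operator then $\pi(T)$ is quasinilpotent, so $\sigma_e(T)=\{0\}\subseteq[0,\rho(T)]$, and the conclusion follows at once from (i). The only step that calls for genuine verification is the claim that $\mathcal M_-$ is a closed ideal, which is precisely what makes $\mathcal M/\mathcal M_-$ a Banach lattice and thereby forces positivity of the diagonal coefficients; no additional spectral input is needed beyond Corollary \ref{cor to main 1}. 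A minor point, handled by complexifying in the usual way, is that the scalar produced on the one-dimensional real quotient is a nonnegative real and corresponds to the same value $\lambda_\mathcal M$ in the complex spectral picture.
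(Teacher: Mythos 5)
Your argument is correct and follows essentially the same route as the paper: invoke Corollary \ref{cor to main 1} to identify each $\lambda\in\sigma(T)\setminus\sigma_e(T)$ with a diagonal coefficient $\lambda_{\mathcal M}$ along the ideal-triangularizing chain, and then use positivity of the induced operator on the one-dimensional quotient Banach lattice $\mathcal M/\mathcal M_-$ to conclude $\lambda_{\mathcal M}\ge 0$, with (ii) following since $\sigma_e(T)=\{0\}$ for a Riesz operator. The paper's proof is merely terser, leaving implicit the points you spell out (that $\mathcal M_-$ is a closed ideal and that the upper bound $\rho(T)$ is automatic).
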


\begin{proof}
(i) Let $\mathcal C$ be an ideal-triangularizing chain for the operator $T$. Pick any $\lambda \in \sigma(T)\setminus \sigma_e(T)$. By \Cref{cor to main 1}, there exists an ideal $\mathcal J\in \mathcal C$ such that the dimension of $\mathcal J/\mathcal J_-$ is one and $\lambda=\lambda_{\mathcal J}$.
Hence, $T_{\mathcal J}=\lambda_{\mathcal J}I$ on $\mathcal J/\mathcal J_-$. Since $T$ is positive, the induced operator $T_{\mathcal J}$ is also positive, so that $\lambda_{\mathcal J}\geq 0$.

(ii) follows from (i) and the fact that the essential spectrum of a Riesz operator equals $\{0\}$.
\end{proof}

A close inspection of the proof of \cite[Theorem 7.2.9]{RaRo} shows that we have the following sharpening of Ringrose's theorem. To avoid repetition, we omit its proof.

\begin{theorem}\label{RingroseMultiplicity}
Let $T$ be a triangularizable Riesz operator on a Banach space $X$.
The diagonal multiplicity of each non-zero eigenvalue of $T$ with respect to any triangularizing chain is equal to its algebraic multiplicity.
\label{multiplicities}
\end{theorem}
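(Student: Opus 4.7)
The plan is to adapt the classical finite-rank argument behind the linear-algebra version of Ringrose's theorem, exploiting the spectral properties already established: for a Riesz operator $T$, each non-zero $\lambda\in\sigma(T)$ is an isolated point of $\sigma(T)$ whose spectral projection $P_\lambda(T)$ has finite-dimensional range $E_\lambda:=\Range(P_\lambda(T))$ of dimension equal to the algebraic multiplicity of $\lambda$ (see \Cref{Essential spectrum Riesz operator}). Fix such a non-zero $\lambda$ and a triangularizing chain $\mathcal C$. Because $\lambda$ is isolated in $\sigma(T)$, the holomorphic functional calculus at $T$ produces both $P:=P_\lambda(T)$ and the \emph{reduced resolvent} $S:=f(T)$, where $f$ is holomorphic on a neighborhood of $\sigma(T)$ with $f(z)=(z-\lambda)^{-1}$ off a small disc around $\lambda$ and $f(\lambda)=0$. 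Realized as contour integrals of $(zI-T)^{-1}$ over contours in the resolvent set, $P$ and $S$ preserve every closed $T$-invariant subspace, and in particular every $\mathcal M\in\mathcal C$ and every predecessor $\mathcal M_-$. Moreover $S(T-\lambda)=(T-\lambda)S=I-P$.

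Next I would restrict attention to the finite-dimensional space $E_\lambda$ and consider the chain $\widetilde{\mathcal C}:=\{P\mathcal M:\mathcal M\in\mathcal C\}=\{\mathcal M\cap E_\lambda:\mathcal M\in\mathcal C\}$. This is a finite ascending chain $\{0\}=V_0\subsetneq V_1\subsetneq\cdots\subsetneq V_n=E_\lambda$ with $n=\dim E_\lambda$. By completeness of $\mathcal C$, for each $i$ there is a smallest $\mathcal N_i\in\mathcal C$ with $\mathcal N_i\cap E_\lambda=V_i$, and $P$-invariance of $(\mathcal N_i)_-$ together with continuity of $P$ yields $(\mathcal N_i)_-\cap E_\lambda=P(\mathcal N_i)_-=V_{i-1}$. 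For any $\mathcal M\in\mathcal C$ lying strictly between $\mathcal N_i$ and $\mathcal N_{i+1}$ one has $\mathcal N_i\subseteq\mathcal M_-$, so $\mathcal M_-\cap E_\lambda=V_i=\mathcal M\cap E_\lambda$. Therefore $\{\mathcal M\in\mathcal C:\mathcal M\cap E_\lambda\neq\mathcal M_-\cap E_\lambda\}=\{\mathcal N_1,\ldots,\mathcal N_n\}$, of cardinality $n$.

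The proof is then completed by proving the equivalence: for $\mathcal M\in\mathcal C$ with $\mathcal M\neq\mathcal M_-$, the identity $\lambda_{\mathcal M}=\lambda$ holds if and only if $\mathcal M\cap E_\lambda\neq\mathcal M_-\cap E_\lambda$. For ($\Leftarrow$), given $x\in(\mathcal M\cap E_\lambda)\setminus\mathcal M_-$ we have $(T-\lambda_{\mathcal M})x\in\mathcal M_-\cap E_\lambda$; if $\lambda_{\mathcal M}\neq\lambda$, then $T-\lambda_{\mathcal M}$ is invertible on $E_\lambda$ (since $\sigma(T|_{E_\lambda})=\{\lambda\}$) and, being a bijection of the finite-dimensional, $T$-invariant subspace $\mathcal M_-\cap E_\lambda$ onto itself, it forces $x\in\mathcal M_-$, a contradiction. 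For ($\Rightarrow$), take $x\in\mathcal M\setminus\mathcal M_-$ with $(T-\lambda)x\in\mathcal M_-$; applying $S$ gives $(I-P)x=S(T-\lambda)x\in S\mathcal M_-\subseteq\mathcal M_-$, so $Px\equiv x\pmod{\mathcal M_-}$ and hence $Px\in(\mathcal M\cap E_\lambda)\setminus\mathcal M_-$. Combining the two halves shows $\{\mathcal M\in\mathcal C:\mathcal M\neq\mathcal M_-,\ \lambda_{\mathcal M}=\lambda\}=\{\mathcal N_1,\ldots,\mathcal N_n\}$, which has exactly $n$ elements, the algebraic multiplicity of $\lambda$.

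I expect the main technical subtlety to be verifying rigorously that $P$ and $S$ preserve every closed $T$-invariant subspace; this rests on expressing them as contour integrals of $(zI-T)^{-1}$ over contours in the resolvent set of $T$ and on the elementary observation that any closed $T$-invariant subspace is automatically invariant under $(zI-T)^{-1}$ whenever $zI-T$ is invertible.
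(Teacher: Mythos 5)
Your overall strategy is sound and is essentially the one the paper intends: the paper omits the proof and points to a close inspection of \cite[Theorem 7.2.9]{RaRo}, whose argument likewise reduces the count to the finite-dimensional spectral subspace $E_\lambda$ by means of the Riesz projection, and your second and third paragraphs carry out that reduction correctly. There is, however, a genuine flaw in the justification of the step you yourself single out as the crux. The ``elementary observation'' that a closed $T$-invariant subspace is invariant under $(zI-T)^{-1}$ for every $z$ in the resolvent set is \emph{false} for general bounded operators: for the bilateral shift $U$ on $\ell^2(\mathbb Z)$, the subspace $\overline{\Span}\{e_n : n\ge 0\}$ is invariant under $U$ but not under $U^{-1}$, i.e.\ not under $(zI-U)^{-1}$ at $z=0$, which lies in the resolvent set. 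The observation is automatic only for $z$ in the \emph{unbounded} component of the resolvent set, via the Neumann series $(zI-T)^{-1}=\sum_{n\ge 0}z^{-n-1}T^n$ for large $\abs{z}$ together with the standard open-and-closed (analytic continuation) argument. To make your proof work you must invoke the fact, recorded in the paper as the corollary to \Cref{Essential spectrum Riesz operator}, that the resolvent set of a (polynomially) Riesz operator is connected, hence equal to its unbounded component; only then do $P_\lambda(T)$ and the reduced resolvent $S$, being contour integrals of the resolvent, leave every member of $\mathcal C$ and every predecessor invariant. Since the needed connectedness is available in the paper, the gap is repairable, but as written the key step rests on a false statement.

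A smaller omission: your assertion that the induced chain $\{\mathcal M\cap E_\lambda : \mathcal M\in\mathcal C\}$ has exactly $\dim E_\lambda+1$ members requires the remark that the inclusion $\mathcal M\cap E_\lambda\hookrightarrow\mathcal M$ induces an injection of $(\mathcal M\cap E_\lambda)/(\mathcal M_-\cap E_\lambda)$ into $\mathcal M/\mathcal M_-$, so that each jump has dimension at most one; combined with your identity $(\mathcal N_i)_-\cap E_\lambda=V_{i-1}$ this forces $\dim V_i/V_{i-1}=1$ and hence $n=\dim E_\lambda$. With these two repairs the argument is complete and matches the route of \cite[Theorem 7.2.9]{RaRo}.
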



We conclude this section by \Cref{spectrum is on a real diagonal}, which roughly says that the spectrum of an ideal-triangularizable Riesz operator lies on its ``matrix" diagonal, which we shall explain in the following lines.
Let $a$ be an atom in $E$. Then for each vector $x\in E$ we have the order decomposition $x=\lambda_xa+y$ for uniquely determined $\lambda_x \in \mathbb R$ and $y\in \{a\}^\perp$. The positive linear functional $\varphi_a\colon E\to \mathbb R$ defined as $\varphi_a(x)=\lambda_x$ is a positive lattice homomorphism called the \term{coordinate functional} associated with the atom $a$. For an operator $T$ on $E$, the number $\varphi_a(Ta)$ is called the \term{diagonal entry} of $T$ with respect to the atom $a$. If $a'$ is any other atom in $E$ whose linear span equals the linear span of $a$, then $a$ and $a'$ are called \term{equivalent}. Since for equivalent atoms $a$ and $a'$ we have
$$\varphi_a(Ta)=\varphi_{a'}(Ta'),$$
the notion of the diagonal entry is independent of the choice of equivalent atoms.
Let $\mathcal A$ be any maximal set of pairwise non-equivalent atoms.

The following result, which is an improvement of \cite[Proposition 4]{Kandic13} yields that the spectrum of an ideal-triangularizable Riesz operator on an order continuous Banach lattice equals to the set of its diagonal entries counted with multiplicities.

\begin{theorem}\label{spectrum is on a real diagonal}
Let $T$ be an ideal-triangularizable Riesz operator on an order continuous Banach lattice $E$. Then
\begin{equation}\label{veckratnost diagonalcev}
 \sigma(T) \setminus \{0\} = \{ \varphi_a(T a) : a\in \mathcal A  \} \setminus \{0\}
\end{equation}
according to multiplicities of non-zero eigenvalues of the operator $T$.
\end{theorem}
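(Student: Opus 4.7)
The plan is to combine Ringrose's theorem for Riesz operators with multiplicities (Corollary \ref{cor to main 1} and Theorem \ref{RingroseMultiplicity}) with an analysis of the jumps of an ideal-triangularizing chain made possible by order continuity. Fix an ideal-triangularizing chain $\mathcal C$ for $T$. Since $T$ is Riesz, $\sigma_e(T)=\{0\}$, so the cited results give
\[
\sigma(T)\setminus\{0\} = \{\lambda_{\mathcal M} : \mathcal M\in\mathcal C,\ \mathcal M\neq \mathcal M_-\}\setminus\{0\}
\]
as a multiset, with the number of chain jumps producing any fixed nonzero value equal to its algebraic multiplicity. It therefore suffices to exhibit a bijection between the jumps $\{\mathcal M\in\mathcal C:\mathcal M\neq\mathcal M_-\}$ and the equivalence classes of atoms of $E$ under which $\lambda_{\mathcal M}$ corresponds to $\varphi_{a_{\mathcal M}}(Ta_{\mathcal M})$.

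Order continuity enters through the well-known fact that every closed ideal in an order continuous Banach lattice is a projection band. Consequently each $\mathcal M\in\mathcal C$ is a projection band; so is its predecessor $\mathcal M_-$, which is a closed ideal because it is the closure of the union of a chain of ideals. When $\mathcal M\neq\mathcal M_-$, the band decomposition $\mathcal M=\mathcal M_-\oplus(\mathcal M_-^d\cap\mathcal M)$ forces $\mathcal M_-^d\cap\mathcal M$ to be one-dimensional, hence spanned by an atom $a_{\mathcal M}$, which I normalize so that $\varphi_{a_{\mathcal M}}(a_{\mathcal M})=1$. The defining relation $Ta_{\mathcal M}-\lambda_{\mathcal M}a_{\mathcal M}\in\mathcal M_-$, together with $a_{\mathcal M}\perp\mathcal M_-$ (so that $\varphi_{a_{\mathcal M}}$ vanishes on $\mathcal M_-$), then yields $\varphi_{a_{\mathcal M}}(Ta_{\mathcal M})=\lambda_{\mathcal M}$.

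For the bijection, injectivity is immediate: if $\mathcal M_1\subsetneq\mathcal M_2$ are jumps, then $a_{\mathcal M_1}\in\mathcal M_1\subseteq(\mathcal M_2)_-$ while $a_{\mathcal M_2}\perp(\mathcal M_2)_-$, so the two atoms are disjoint and thus inequivalent. Surjectivity is the delicate part. Given an atom $a$, set $\mathcal M_a=\bigcap\{\mathcal N\in\mathcal C:a\in\mathcal N\}$; this belongs to $\mathcal C$ by completeness. Any closed ideal $\mathcal N\in\mathcal C$ not containing $a$ is a projection band with $\mathcal N\cap\mathrm{span}(a)=\{0\}$, so $\mathcal N\subseteq\{a\}^d$, and since $\{a\}^d$ is closed under closed linear spans, $(\mathcal M_a)_-\subseteq\{a\}^d$. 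Therefore $a\in\mathcal M_a\setminus(\mathcal M_a)_-$, so $\mathcal M_a$ is a jump. Writing $a=z+\beta a_{\mathcal M_a}$ according to the band split of $\mathcal M_a$, the component $z$ lies in $\{a\}^d$, hence $\varphi_a(z)=0$ and $\beta\varphi_a(a_{\mathcal M_a})=1$; thus $a_{\mathcal M_a}$ has a nonzero band component on $\mathrm{span}(a)$, and its atomicity forces $a_{\mathcal M_a}$ to be a positive scalar multiple of $a$, so $[a_{\mathcal M_a}]=[a]$. With the bijection in hand, and the diagonal entries being invariants of equivalence classes of atoms, the Ringrose multiset equality translates directly into the claimed identity \eqref{veckratnost diagonalcev}. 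The crucial use of order continuity is the strengthening of ``$\mathcal N\not\ni a$'' to ``$\mathcal N\perp a$'', which is what prevents the closed linear span of the smaller members of the chain from recapturing $a$ and is the essential obstacle to overcome.
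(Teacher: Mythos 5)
Your proof is correct and follows essentially the same route as the paper's: Ringrose's theorem (Corollary \ref{cor to main 1}) together with Theorem \ref{RingroseMultiplicity}, order continuity turning the chain ideals into projection bands so that each jump $\mathcal M$ is spanned modulo $\mathcal M_-$ by an atom, and the intersection $\mathcal M_a=\bigcap\{\mathcal N\in\mathcal C: a\in\mathcal N\}$ for the converse inclusion. You in fact supply more detail than the paper, which leaves the key step $a\notin(\mathcal M_a)_-$ as ``easy to see''; your observation that $\mathcal N\not\ni a$ forces $\mathcal N\subseteq\{a\}^d$ is precisely the justification that step needs.
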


\begin{proof}
Let $\mathcal C$ be a triangularizing chain for $T$.  Assume that $\lambda \in \sigma(T) \setminus \{0\}$. We first show that  $\lambda = \varphi_a(T a)$ for some atom $a \in E$.  By Ringrose's theorem for polynomially Riesz operators (see \Cref{cor to main 1})  there is  $\mathcal J \in \mathcal C$ such that
$\lambda = \lambda_{\mathcal J}$. Since  $\mathcal J/ \mathcal J_-$ is one-dimensional and $E$ is order continuous, there exists an atom $a$ such that $\mathcal J = \mathcal J_- \oplus \mathbb R a$. From $Ta-\lambda_\mathcal Ja\in \mathcal J_-$ we conclude
$\varphi_a(Ta-\lambda_\mathcal Ja)=0$ or equivalently $\varphi_a(Ta)=\lambda_{\mathcal J}\varphi_a(a)=\lambda_{\mathcal J}$.

Assume that the multiplicity of $\lambda$ is $k>1$. We must show that  $\lambda = \varphi_{a_j}(T a_j)$ for some atoms $a_1,\ldots,a_k \in E$ with norm one.  By \Cref{RingroseMultiplicity}  there is a chain $\mathcal J_1\subsetneq \cdots \subsetneq \mathcal J_k$ in $\mathcal C$ such that
$\lambda=\lambda_{\mathcal J_j}$ for each $1\leq j\leq k$. By the first part of the proof for each $1\leq j\leq k$ there exists an atom $a_j$ of norm one such that $\mathcal J_j=(\mathcal J_j)_-\oplus \mathbb R a_j$ and $\lambda=\varphi_{a_j}(Ta_j)$. If $i<j$, then $a_i\in \mathcal J_i\subsetneq \mathcal J_{j}$ yields that $a_i\in (\mathcal J_{j})_-$, and so $a_i\neq a_j$.


To prove the converse inclusion, let $a$ be an atom of $E$ such that $\varphi_a(T a) \neq 0$. Define
$\mathcal F = \{ \mathcal J \in \mathcal C : a \in \mathcal J \} $. Let $\mathcal J_a$ be the intersection of all members of
$\mathcal F$. Then $\mathcal J_a$ is an ideal from $\mathcal C$ and $a \in \mathcal J_a$. It is easy to see that
$a \not\in (\mathcal J_a)_-$ and that $\varphi_a(T a)$ belongs to  $\sigma(T) \setminus \{0\}$.
\end{proof}

The preceding theorem can be applied in the case of the classical spaces $\ell^p$ $(1\leq p<\infty)$ and $c_0$ to prove that the non-zero
spectrum counted according to multiplicities of eigenvalues of an ideal-triangularizable Riesz operator lies on the diagonal of the associated matrix with respect to the standard basis. Indeed, the standard basis vectors of $\ell^p$ ($1\leq p<\infty)$ and $c_0$ are precisely atoms of norm one.

\section {The diagonal and the atomic diagonal}\label{section:diagonals}

Let $T$ be an ideal-triangularizable Riesz operator on an order continuous Banach lattice $E$. By \Cref{spectrum is on a real diagonal}, we have
\begin{equation}\label{veckratnost diagonalcev}
 \sigma(T) \setminus \{0\} = \{ \varphi_a(T a) : a\in \mathcal A  \} \setminus \{0\}
\end{equation}
according to multiplicities of non-zero eigenvalues of the operator $T$.
If we consider the $2\times 2$ operator matrix associated with $T$ with respect to the decomposition $E=A\oplus C$, then the spectrum of $T$ is contained on the ``diagonal" of $T$. In this section, we study the abstract diagonal of a regular operator on an order complete vector lattice. We prove that every diagonal splits into its atomic and its continuous part. Moreover, we prove that the continuous part of the diagonal of an abstract integral operator or a Riesz operator is always zero. This motivates the study of those regular operators for which the diagonal is equal to its atomic part.

Throughout this section, we assume that $E$ is an order complete vector lattice. Then the band $\orth(E)$ is a projection band in $\mathcal L_r(E)$ generated by the identity operator, and so we have the following order decomposition
$$\mathcal L_r(E)=\orth(E)\oplus \orth(E)^{d}=\{I\}^{dd}\oplus \{I\}^d$$
of the order complete vector lattice $\mathcal L_r(E)$.

  Let ${\mathcal P}_E$ be the band projection onto the projection band $\orth(E)$. Given a regular operator $T\in \mathcal L_r(E)$, the projection $\mathcal P_E(T)$ of $T$ on $\orth(E)$ is called the \term{diagonal} of $T$. If $T$ is positive, then Schep \cite[Theorem 1.1]{Schep80} proved that the projection $\mathcal P_E(T)$ is given by the formula

\begin{equation}\label{diagonal_formula}
\mathcal P_E(T)=\inf\{\sum_{i=1}^n P_iTP_i:\; 0\leq P_i\leq I, P_i^2=P_i, \sum_{i=1}^nP_i=I\}.
\end{equation}

Given a band $B$ of $E$ and a regular operator $T$ on $E$, we write shortly $\mathcal P_B(T)$ instead of
$\mathcal P_B(P_B T P_B)$. The operator $\mathcal P_B$ is the projection onto $\orth(B)$ of the order complete vector lattice $B$. We call the operator $\mathcal P_B(T)$ the diagonal of $T$ with respect to $B$.
The regular operator $\mathcal P_A(T)$ is the \term{atomic diagonal} of the operator $T$. It is not hard to see that for a positive operator $T$, we have
$$\mathcal P_A(T)=\sup\{\sum_{a\in \mathcal F}P_aTP_a:\; \mathcal F\subseteq \mathcal A \textrm{ is finite}\},$$
and that for a regular operator $T$, we have
$$\mathcal P_A(T)=\sum_{a\in \mathcal A}P_aTP_a$$
where the latter series converges in order. For more details, see \cite{Kandic13}.

The main concern of this section is to study those regular operators $T$ whose diagonal and atomic diagonal coincide, i.e., $\mathcal P_E(T)=\mathcal P_A(T)$. Schep proved \cite[Corollary 1.7]{Schep80} that for each atomless Banach lattice $E$ and for each positive compact operator $T\colon E\to E$, the zero operator is the only positive orthomorphism $S$ which satisfies $0\leq S\leq T$. In particular, if $E$ is order complete, this yields that for every positive compact operator, we have
$\mathcal P_E(T)=\mathcal P_A(T)$. Therefore, positive compact operators on order complete Banach lattices are examples of operators for which the diagonal and the atomic diagonal coincide. As already mentioned, we will extend this list by the class of abstract integral operators on order complete vector lattices (see \Cref{diagonal finite rank}) and positive Riesz operators $T$ on order complete Banach lattices (see \Cref{diagonal Riesz}).

If $E=B_1\oplus B_2$ is a band decomposition of $E$, then $\orth(E) \cong \orth(B_1) \oplus \orth(B_2)$. In particular, $\orth(E) \cong \orth(A)\oplus \orth(C)$.
The following lemma shows that every band decomposition of a vector lattice yields a decomposition of the diagonal operator.

\begin{lemma}\label{diagonal split}
If $E=B_1\oplus B_2$ is a band decomposition of an order complete vector lattice $E$, then $\mathcal P_E=\mathcal P_{B_1}\oplus \mathcal P_{B_2}$.
\end{lemma}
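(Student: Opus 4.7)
My strategy is to reduce to the positive case and then apply Schep's formula~\eqref{diagonal_formula}. Since $\mathcal P_E$, $\mathcal P_{B_1}$ and $\mathcal P_{B_2}$ are band projections on $\mathcal L_r(E)$, $\mathcal L_r(B_1)$ and $\mathcal L_r(B_2)$ respectively, they are order-continuous and linear. Hence, once the identity $\mathcal P_E(T) = \mathcal P_{B_1}(T) + \mathcal P_{B_2}(T)$ is established for positive $T$ (with each summand interpreted via the embedding $\orth(B_j) \hookrightarrow \orth(E)$ noted just before the lemma), the general case follows by writing $T = T^+ - T^-$.

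So fix a positive $T$ on $E$, and set $P_j := P_{B_j}$ for brevity. The first step is to show that in the Schep formula one may restrict to partitions of $I$ by band projections that refine $\{P_1, P_2\}$. Given any partition $\{Q_1, \ldots, Q_n\}$ of $I$ by band projections, the family $\{Q_i P_1, Q_i P_2 : 1 \le i \le n\}$ is again such a partition (band projections commute), and expanding
$$Q_i T Q_i = (Q_i P_1 + Q_i P_2)\, T\, (Q_i P_1 + Q_i P_2)$$
and discarding the two nonnegative cross terms (using $T \ge 0$) yields
$$\sum_{i=1}^n Q_i T Q_i \;\ge\; \sum_{i=1}^n Q_i P_1 T P_1 Q_i + \sum_{i=1}^n Q_i P_2 T P_2 Q_i.$$
Thus refinement can only decrease the Schep sum, so the infimum is unchanged when taken over partitions subordinate to $\{P_1, P_2\}$.

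A partition refining $\{P_1, P_2\}$ splits uniquely as a disjoint union $\mathcal Q_1 \cup \mathcal Q_2$, where $\mathcal Q_j$ is a partition of $P_j$ by band projections lying below $P_j$ (equivalently, band projections inside $B_j$). For any $Q \in \mathcal Q_1$ the inequality $Q \le P_1$ gives $Q T Q = Q (P_1 T P_1) Q$, and similarly for $\mathcal Q_2$. Therefore
$$\sum_{Q \in \mathcal Q_1 \cup \mathcal Q_2} Q T Q \;=\; \sum_{Q \in \mathcal Q_1} Q (P_1 T P_1) Q \;+\; \sum_{Q \in \mathcal Q_2} Q (P_2 T P_2) Q.$$
Because $\mathcal Q_1$ and $\mathcal Q_2$ can be chosen independently, and because the two summands live in the mutually disjoint bands $\orth(B_1)$ and $\orth(B_2)$ of $\mathcal L_r(E)$, taking the infimum factors through each term, and an application of Schep's formula inside $B_1$ and $B_2$ yields
$$\mathcal P_E(T) = \mathcal P_{B_1}(P_1 T P_1) + \mathcal P_{B_2}(P_2 T P_2) = \mathcal P_{B_1}(T) + \mathcal P_{B_2}(T).$$

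\textbf{Main obstacle.} The only genuinely technical point is the reduction to subordinate partitions: one has to be comfortable with the fact that band projections mutually commute and that the cross terms $Q_i P_1 T P_2 Q_i$, $Q_i P_2 T P_1 Q_i$ are nonnegative precisely because $T \ge 0$. Everything else is bookkeeping, the key conceptual ingredient being the identification $\orth(E) \cong \orth(B_1) \oplus \orth(B_2)$ that justifies writing the right-hand side as a single orthomorphism on $E$.
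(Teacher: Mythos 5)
Your proof is correct and follows essentially the same route as the paper: reduce to positive $T$, apply Schep's formula \eqref{diagonal_formula}, and discard the nonnegative cross terms after inserting $P_{B_1}+P_{B_2}$. The only cosmetic difference is that you derive both inequalities at once by passing to partitions subordinate to $\{P_{B_1},P_{B_2}\}$, whereas the paper proves one inequality explicitly and notes the reverse is similar.
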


\begin{proof}
Let $T$ be any positive operator on $E$.
Suppose that $P_1,\ldots,P_n$ are  band projections satisfying $ 0\leq P_i\leq I, P_i^2=P_i, \sum_{i=1}^nP_i=I$. Since band projections commute, we have
\begin{align*}
\sum_{i=1}^n P_iTP_i & = (P_{B_1}+P_{B_2})\sum_{i=1}^n P_iTP_i(P_{B_1}+P_{B_2})\\
&\geq \sum_{i=1}^n P_{B_1}P_iTP_iP_{B_1}+\sum_{i=1}^n P_{B_2}P_iTP_iP_{B_2}\\
&=\sum_{i=1}^n (P_{B_1}P_iP_{B_1})P_{B_1}TP_{B_1}(P_{B_1}P_iP_{B_1})+\sum_{i=1}^n (P_{B_2}P_iP_{B_2})P_{B_2}TP_{B_2}(P_{B_2}P_iP_{B_2}) \\
& \geq  \mathcal P_{B_1}(T) + \mathcal P_{B_2}(T).
\end{align*}
By definition of $\mathcal P_E(T)$ we have $\mathcal P_E(T) \geq \mathcal P_{B_1}(T)+\mathcal P_{B_2}(T)$.
The opposite inequality can be shown similarly.
\end{proof}
%

The set of all regular operators for which diagonal operators coincide is always a band.

\begin{proposition}\label{P equals D on a band}
For each band $B$ of an order complete vector lattice $E$, the set
$$\{T\in \mathcal L_r(E):\; \mathcal P_E(T)=\mathcal P_B(T)\}$$
is a band in $\mathcal L_r(E)$.
\end{proposition}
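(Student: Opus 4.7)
The plan is to identify the set in question, which I shall call $\mathscr D_B$, with the kernel of a band projection on $\mathcal L_r(E)$; once this identification is made, the band property is automatic.

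Since $E$ is order complete and $B$ is a band, $E = B \oplus B^d$ is a band decomposition, and \Cref{diagonal split} applied to it yields
$$\mathcal P_E(T) = \mathcal P_B(T) + \mathcal P_{B^d}(T) \qquad (T \in \mathcal L_r(E)),$$
with $\mathcal P_B(T) \in \orth(B)$ and $\mathcal P_{B^d}(T) \in \orth(B^d)$ under the decomposition $\orth(E) = \orth(B)\oplus \orth(B^d)$. Because the two summands are disjoint, the equality $\mathcal P_E(T) = \mathcal P_B(T)$ is equivalent to $\mathcal P_{B^d}(T) = 0$, so
$$\mathscr D_B = \{T \in \mathcal L_r(E) : \mathcal P_{B^d}(T) = 0\}.$$

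Next I argue that $\orth(B^d)$—viewed as a subset of $\mathcal L_r(E)$ by extending orthomorphisms on $B^d$ to $E$ as zero on $B$—is a projection band in $\mathcal L_r(E)$ whose band projection sends $T$ to $\mathcal P_{B^d}(T)$. From the preliminaries, $\orth(E)$ is a projection band in $\mathcal L_r(E)$; within $\orth(E)$, the splitting $\orth(E) = \orth(B) \oplus \orth(B^d)$ exhibits $\orth(B^d)$ as a projection band of $\orth(E)$. Since a band inside a band is a band and $\mathcal L_r(E)$ is order complete (so every band is a projection band), $\orth(B^d)$ is itself a projection band in $\mathcal L_r(E)$. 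Calling the corresponding band projection $\mathcal Q$, we have $\mathcal Q(T) = \mathcal Q(\mathcal P_E(T))$ (because $\mathcal Q$ annihilates $\orth(E)^d$), and by the decomposition above this equals $\mathcal P_{B^d}(T)$. Therefore $\mathscr D_B = \ker \mathcal Q$ coincides with the disjoint complement of $\orth(B^d)$ in $\mathcal L_r(E)$, which is a band.

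The main point requiring care is the passage from the a priori ``compressed'' description $\mathcal P_{B^d}(T) = \mathcal P_{B^d}(P_{B^d} T P_{B^d})$—defined on the level of $B^d$—to a genuine band projection defined on all of $\mathcal L_r(E)$; this is exactly what \Cref{diagonal split} supplies. Once this identification is in hand, the conclusion follows from the standard fact that kernels of band projections are bands.
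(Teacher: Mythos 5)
Your proof is correct and follows essentially the same route as the paper: both identify the set as the kernel of $\mathcal P_E-\mathcal P_B$ acting on $\mathcal L_r(E)$ and conclude that this kernel is a band. The only cosmetic difference is that you pin down $\mathcal P_E-\mathcal P_B$ explicitly as the band projection $\mathcal P_{B^d}$ onto $\orth(B^d)$ via \Cref{diagonal split}, whereas the paper only observes that $0\leq \mathcal P_E-\mathcal P_B\leq \mathcal I$ makes it an orthomorphism of $\mathcal L_r(E)$ and invokes the fact that kernels of orthomorphisms are bands.
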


\begin{proof}
Since $\orth(E) \cong \orth (B) \oplus \orth(B^d)$, the operator $\mathcal P_B\colon \mathcal L_b(E)\to \orth(B)$ is an order projection onto the band $\orth(B)$ in $\mathcal L_r(E)$. In particular, $\mathcal P_B$ is order continuous. Since $\mathcal P_B$ and $\mathcal P_E$ satisfy $0\leq \mathcal P_B \leq \mathcal P_E\leq \mathcal I$ where $\mathcal I$ denotes the identity operator on $\mathcal L_r(E)$, the difference $\mathcal P_E-\mathcal P_B$ is an orthomorphism on $\mathcal L_r(E)$. Hence, by \cite[Theorem 2.48]{Aliprantis:06}, the kernel of $\mathcal P_E-\mathcal P_B$ is a band in $\mathcal L_r(E)$ and the proof is concluded.
\end{proof}

The following corollary follows from \Cref{P equals D on a band} and the discussion preceding \Cref{diagonal split}.

\begin{corollary}\label{pre-Integral operators}
If $E$ is an order complete Banach lattice, then every operator $T\in (E\otimes E^\sim)^{dd}$ satisfies $\mathcal P_E(T)=\mathcal P_A(T)$.
\end{corollary}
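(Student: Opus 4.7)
The strategy is to view \Cref{pre-Integral operators} as a direct application of \Cref{P equals D on a band} with the band $B$ taken to be the atomic part $A$ of $E$. That proposition tells us that
$$\mathscr D_0 := \{T \in \mathcal L_r(E) : \mathcal P_E(T) = \mathcal P_A(T)\}$$
is a band in $\mathcal L_r(E)$. Since in an order complete vector lattice the band generated by a subset coincides with its double disjoint complement, in order to conclude $(E\otimes E^\sim)^{dd}\subseteq \mathscr D_0$ it suffices to verify the generators: I will show that every rank-one operator $x\otimes\varphi$ with $x\in E$ and $\varphi\in E^\sim$ lies in $\mathscr D_0$.

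Because $\mathscr D_0$ is a linear subspace, and the Riesz decompositions $x=x^+-x^-$ and $\varphi=\varphi^+-\varphi^-$ express $x\otimes\varphi$ as a linear combination of four positive rank-one operators, the problem is reduced to the case $x\ge 0$ and $\varphi\ge 0$. Applying \Cref{diagonal split} to the canonical decomposition $E=A\oplus C$ gives $\mathcal P_E = \mathcal P_A\oplus \mathcal P_C$, so the identity $\mathcal P_E(x\otimes\varphi)=\mathcal P_A(x\otimes\varphi)$ is equivalent to $\mathcal P_C(x\otimes\varphi)=0$. By the conventions fixed in this section, this last quantity is computed from the compression
$$P_C(x\otimes\varphi)P_C = (P_C x)\otimes(\varphi\circ P_C),$$
which is a positive rank-one operator on $C$.

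Since rank-one operators are compact and $C$, being a band of the order complete Banach lattice $E$, is itself an atomless order complete Banach lattice, Schep's \cite[Corollary 1.7]{Schep80} recalled just before the corollary applies on $C$: the only positive orthomorphism dominated by a positive compact operator is zero. As the diagonal $\mathcal P_C(S)$ of any positive operator $S$ on $C$ is a positive orthomorphism satisfying $0\le\mathcal P_C(S)\le S$, this forces $\mathcal P_C\bigl((P_C x)\otimes(\varphi\circ P_C)\bigr)=0$, which finishes the argument. I expect the only mildly delicate step to be the bookkeeping for the compression in the second paragraph: one must observe that $P_C$ commutes with its own action on the left and right of a rank-one operator in the expected way, and that $C$ inherits both order completeness and the Banach lattice norm from $E$ so that Schep's hypothesis is genuinely available; once these are acknowledged, the remainder is a straightforward assembly of the preceding lemma and proposition.
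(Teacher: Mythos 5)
Your proposal is correct and follows essentially the same route as the paper: reduce via \Cref{P equals D on a band} to positive rank-one operators, split the diagonal over $E=A\oplus C$ using \Cref{diagonal split}, and kill the continuous part by applying Schep's \cite[Corollary 1.7]{Schep80} to the rank-one (hence compact) compression $P_C T P_C$ on the atomless Banach lattice $C$. The extra bookkeeping you supply (the four-term positive decomposition of $x\otimes\varphi$ and the explicit form of the compression) is just a more detailed writing-out of the paper's two-line argument.
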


\begin{proof}
Since the set of all regular operators $T$ for which $\mathcal P_E(T)=\mathcal P_A(T)$ is a band in $\mathcal L_r(E)$, it suffices to prove that $\mathcal P_E(T)=\mathcal P_A(T)$ for each positive rank-one operator $T$.
Since the rank of the operator $P_CTP_C$ is at most one, the inequality $0\leq \mathcal P_C(P_CTP_C)\leq P_CTP_C$ and \cite[Corollary 1.7]{Schep80} imply that $\mathcal P_E(T)-\mathcal P_A(P_ATP_A)=\mathcal P_C(P_CTP_C)=0$.
\end{proof}

Since $E_n^\sim\subseteq E^\sim$, \Cref{pre-Integral operators}, in particular, yields that every abstract integral operator $T$ on an order complete Banach lattice satisfies $\mathcal P_E(T)=\mathcal P_A(T)$. By the following theorem, the assumption of a complete norm structure is redundant.

\begin{theorem}\label{diagonal finite rank}
If $E$ is an order complete vector lattice, then every abstract integral operator $T$ satisfies
$\mathcal P_E(T)=\mathcal P_A(T)$.
\end{theorem}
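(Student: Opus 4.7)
The plan is to combine \Cref{P equals D on a band} with \Cref{diagonal split} to reduce the theorem to a single model case. By \Cref{P equals D on a band} applied with $B = A$, the set $\mathscr D := \{T \in \mathcal L_r(E) : \mathcal P_E(T) = \mathcal P_A(T)\}$ is a band in $\mathcal L_r(E)$, so it suffices to show $E \otimes E_n^\sim \subseteq \mathscr D$. Decomposing $x \in E$ and $\varphi \in E_n^\sim$ into positive and negative parts (noting that $E_n^\sim$ is a band in $E^\sim$ and hence closed under lattice operations), it further suffices to treat a single positive rank-one tensor $T = u \otimes \psi$ with $u \in E_+$ and $\psi \in (E_n^\sim)_+$. \Cref{diagonal split} applied to $E = A \oplus C$ gives $\mathcal P_E(T) = \mathcal P_A(T) + \mathcal P_C(T)$, so the problem reduces to showing $\mathcal P_C(P_C T P_C) = 0$. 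Since $P_C T P_C = (P_C u) \otimes (\psi \circ P_C)$ with $\psi \circ P_C \in (C_n^\sim)_+$, the core claim is the following: \emph{if $C$ is an atomless order complete vector lattice, $u \in C_+$, and $\psi \in (C_n^\sim)_+$, then $\mathcal P_C(u \otimes \psi) = 0$.}

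To prove the core claim, set $S := \mathcal P_C(u \otimes \psi)$, a positive orthomorphism on $C$ with $0 \leq S \leq u \otimes \psi$. By Schep's formula \eqref{diagonal_formula}, the family $\{\sum_i P_i (u \otimes \psi) P_i\}$ over finite partitions $I = \sum_{i=1}^n P_i$ into band projections is downward directed (refining $P = Q_1 + Q_2$ introduces only nonnegative cross terms $Q_1 T Q_2 + Q_2 T Q_1$), with infimum $S$ in $\mathcal L_r(C)$. Evaluation at $y \in C_+$ and subsequent application of the order continuous functional $\psi$ both commute with infima of downward directed families of positive elements, so
\[
\psi(Sy) \;=\; \inf_{(P_i)} \sum_i \psi(P_i y)\, \psi(P_i u) \;\le\; \inf_{(P_i)} \bigl(\max_i \psi(P_i u)\bigr)\, \psi(y).
\]

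The main obstacle is then a Lyapunov-type claim: in the atomless setting, $\max_i \psi(P_i u)$ can be driven below any $\epsilon > 0$ by refining the partition. I would verify this on the principal band $B$ generated by $u' := P_{N_\psi^d} u$, where $N_\psi$ is the (band, by order continuity of $\psi$) null ideal of $\psi$; here the set function $\mu(P) := \psi(P u) = \psi(P u')$ is a strictly positive, finite, $\sigma$-additive measure on the complete Boolean algebra of band projections of $B$, and it is atomless because any nonzero band of $B$ splits into two nonzero disjoint sub-bands (being a band of the atomless lattice $C$), both of which carry strictly positive $\mu$-measure since $B \subseteq N_\psi^d$. A standard Lyapunov--Sierpi\'nski bisection then halves projections repeatedly, producing partitions of $B$ whose pieces all have $\mu$-measure below $\epsilon$; the band projection onto $B^d$ has $\mu$-measure zero, so padding completes a partition of $I$ on $C$ with the required property. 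Once $\psi(Sy) = 0$ for every $y \in C_+$, the range of $S$ lies in $N_\psi$: on $(N_\psi)_+$ we have $Sy \le \psi(y) u = 0$, and on $(N_\psi^d)_+$ the band-preservation of the orthomorphism $S$ places $Sy \in N_\psi^d$ while the range condition places $Sy \in N_\psi$, giving $Sy \in N_\psi \cap N_\psi^d = \{0\}$; decomposing an arbitrary $y \in C_+$ via $P_{N_\psi}$ and $P_{N_\psi^d}$ yields $Sy = 0$, hence $S = 0$, completing the argument.
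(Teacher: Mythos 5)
Your proof is correct, but the key step is argued quite differently from the paper. Both proofs make the same opening reduction: \Cref{P equals D on a band} makes the set $\{T:\mathcal P_E(T)=\mathcal P_A(T)\}$ a band, so everything comes down to a single positive rank-one operator, and \Cref{diagonal split} localizes the problem to showing that the diagonal vanishes on the atomless part. Where the paper then completes $E$ under the norm $\varphi(|\cdot|)$ to an AL-space $\widetilde E$, verifies that $E$ sits as an order dense ideal in an atomless $\widetilde E$, extends the dominated orthomorphism, and invokes Schep's result that no nonzero orthomorphism lies below a positive compact operator on an atomless Banach lattice, you instead stay entirely inside the vector lattice: you evaluate Schep's infimum formula \eqref{diagonal_formula} against the order continuous functional $\psi$ itself, reducing the claim to the existence of finite band-projection partitions with $\max_i\psi(P_iu)<\epsilon$, which you obtain from the atomlessness of the countably additive measure $P\mapsto\psi(Pu)$ on the Boolean algebra of bands of the carrier. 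In effect you reprove the rank-one case of \cite[Corollary 1.7]{Schep80} by hand; this buys a self-contained, purely order-theoretic argument with no completion machinery, at the cost of carrying out the measure-theoretic partition argument yourself. The one point you should make precise is the ``bisection'': merely splitting each nonzero band into two nonzero sub-bands does not by itself force the maximum below $\epsilon$ (the splits could be very unbalanced), so you need either Sierpi\'nski's intermediate-value theorem for atomless countably additive measures to halve exactly, or a maximal-disjoint-family exhaustion argument (every positive-measure band contains sub-bands of arbitrarily small positive measure; a maximal disjoint family of such sub-bands covers $B$ by strict positivity of $\mu$, is countable and summable, and all but a finite, small-tailed part can be lumped together). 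Either of these is standard and your use of the order continuity of $\psi$ for $\sigma$-additivity, the strict positivity of $\mu$ on the carrier, and the vanishing of $\psi$ on $P_{B^d}u$ are all correctly identified, as is the final step deducing $S=0$ from $\psi(Sy)=0$ via band preservation.
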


\begin{proof}
Since the band of all abstract integral operators on $E$ is generated by the set of all positive order continuous rank one operators in $E\otimes E_n^\sim$, by \Cref{P equals D on a band} it suffices to prove that $\mathcal P_E(T)=\mathcal P_A(T)$ holds for every operator $T=x\otimes \varphi$ on $E$ with $0\leq \varphi\in E_n^\sim$ and $x\in E^+$.

Let us first consider the case when $\varphi$ is strictly positive (that is, $N_\varphi = \{0\}$) and $E$ is atomless. Let us define $\|x\|_\varphi:=\varphi(|x|)$. Then $\|\cdot\|_\varphi$ is a norm on $E$. If $\widetilde E$ is the norm completion of $E$, then
 $(\widetilde E,\|\cdot\|_\varphi)$ is an AL-space in which $E$ is embedded as a norm dense sublattice. Since $\varphi$ is order continuous and $E$ is order complete, an application of \cite[Theorem 4.1]{GTX} shows that $E$ is also an order dense ideal in $\widetilde E$.

We claim that $\widetilde E$ is atomless. If this were not the case, then there would exist an atom $\widetilde a$ in $\widetilde E$. Since $E$ is order dense in $\widetilde E$, there exists a non-zero positive vector $a$ in $E$ such that $0<a\leq \widetilde a$. If $0\leq x,y\leq a$ for some vectors $x,y\in E$ with $x\perp y=0$, then $x\perp y=0$ also in $\widetilde E$. Since $0\leq x,y\leq \widetilde a$, we have $x=0$ or $y=0$ proving that $a$ is an atom in $E$. This contradiction shows that $\widetilde E$ is atomless.

If $\widetilde B$ is a band in $\widetilde E$, then it is easy to see that $B:=\widetilde B\cap E$ is a band in $E$. We claim that $B$ is order dense in $\widetilde B$. If $\widetilde x\in \widetilde B$ is a positive vector, then there exists a net $(x_\alpha)$ of positive vectors in $E$ such that $0\leq x_\alpha\uparrow \widetilde x$ in $\widetilde E$. The inequality $0\leq x_\alpha \leq \widetilde x$ gives
$x_\alpha\in E \cap \widetilde B=B$, which proves the claim. From the order continuity of the norm on $\widetilde E$, it follows that $B$ is norm dense in $\widetilde B$.

To prove that $\mathcal P_E(x\otimes \varphi)=0$, by \eqref{diagonal_formula} it suffices to prove that the zero operator is the only positive operator $S$ on $E$ which satisfies $$0\leq S\leq \sum_{i=1}^n P_i(x\otimes \varphi)P_i$$ for all finite families of band projections $P_i$ which satisfy $\sum_{i=1}^nP_i=I$.
Let $\widetilde P_1,\ldots,\widetilde P_n$ be band projections on $\widetilde E$ which satisfy $\sum_{i=1}^n \widetilde P_i=\widetilde I$ where $\widetilde I$ denotes the identity operator on $\widetilde E$. Then $P_i:=\widetilde P_i|_{E}$ is the band projection onto the band, which is the intersection of the range of $\widetilde P_i$ with $E$. By assumption, we have $0\leq S\leq \sum_{i=1}^n P_i (x\otimes \varphi) P_i$, so that $S$ is dominated by a bounded operator on $E$. Hence, $S$ is also bounded, and let us denote by $\widetilde S$ its norm extension to $\widetilde E$. Let $\widetilde \varphi$ be the norm extension of $\varphi$ to
$\widetilde E$.
Since the norm extension of a positive bounded operator is again positive, it follows that
$$0\leq \widetilde S \leq \sum_{i=1}^n \widetilde P_i (x\otimes \widetilde \varphi) \widetilde P_i.$$
By \eqref{diagonal_formula} it follows that $0\leq \widetilde S\leq \mathcal P_{\widetilde E}(x\otimes \widetilde \varphi)$.
Since $x\otimes \widetilde \varphi$ is a positive rank-one operator on the atomless order complete Banach lattice $\widetilde E$, it follows that $\mathcal P_{\widetilde E}(x\otimes \widetilde \varphi)=0$ by \cite[Corollary 1.7]{Schep80}. This gives $\widetilde S=0$ and, therefore, $S=0$.

For the general case, observe first that by order continuity of $\varphi$ and order completeness of $E$, we have the band decomposition $E=C_\varphi\oplus N_\varphi$. The functional $\varphi$ is strictly positive on $C_\varphi$ and is zero on $N_\varphi$. This implies that $P_{N_\varphi}(x\otimes \varphi)P_{N_\varphi}=0$, and so
$\mathcal P_{N_\varphi}(P_{N_\varphi}(x\otimes \varphi){P_{N_\varphi}})=0$. This implies
\begin{align*}
\mathcal P_E(x\otimes \varphi)&=\mathcal P_E(P_{C_\varphi}(x\otimes \varphi)P_{C_\varphi}+P_{N_\varphi}(x\otimes \varphi)P_{N_\varphi})\\
&=\mathcal P_E(P_{C_\varphi}(x\otimes \varphi)P_{C_\varphi})\\
&=\mathcal P_A(P_AP_{C_\varphi}(x\otimes \varphi)P_{C_\varphi}P_A)+\mathcal P_C(P_CP_{C_\varphi}(x\otimes \varphi)P_{C_\varphi}P_C).
\end{align*}
Since $C$ is atomless and $\varphi$ is strictly positive on $C_\varphi \cap C$, the proof above shows that $\mathcal P_C(P_C P_{C_\varphi}(x\otimes \varphi)P_{C_\varphi} P_C)=0$. Since $x\otimes \varphi=0$ on $N_\varphi$,
we have $P_{A\cap N_{\varphi}}(x\otimes \varphi)P_{A\cap N_{\varphi}} = 0$, and so
\begin{align*}
\mathcal P_E(x\otimes \varphi)&=\mathcal P_A(P_AP_{C_\varphi}(x\otimes \varphi)P_{C_\varphi}P_A)\\
&=\mathcal P_A(P_{A\cap C_{\varphi}}(x\otimes \varphi)P_{A\cap C_{\varphi}}+P_{A\cap N_{\varphi}}(x\otimes \varphi)P_{A\cap N_{\varphi}})\\
&=\mathcal P_A(P_A(x\otimes \varphi)P_A)  \\
&=\mathcal P_A(x\otimes \varphi) .
\qedhere
\end{align*}
\end{proof}

We complete this paper by proving that $\mathcal P_E(T)=\mathcal P_A(T)$ for every positive Riesz operator on every order complete Banach lattice. Our proof relies on a subclass of the class of orthomorphisms. An operator $T\colon E\to E$ is \term{central} if there exists some scalar $\lambda>0$ such that $|Tx|\leq \lambda |x|$ for all $x\in E$. The set of central operators is denoted by $\mathcal Z(E)$. In the case when $E=\mathbb R^n$, a matrix $T$ is central if and only if it is diagonal.

By definition is clear that $\mathcal Z(E)$ is a subset of $\orth(E)$. Since central operators are order bounded and preserve disjointness, by \cite[Theorem 2.40]{Aliprantis:06} the modulus of a central operator $T$ on an Archimedean vector lattice $E$ exists and it satisfies \eqref{modulus} for each $x\in E$. This yields that $|T|$ is also central, and therefore, every central operator on an Archimedean vector lattice is a difference of two positive central operators. Wickstead \cite{Wickstead} provided an example when the inclusion of $\mathcal Z(E)$ in $\orth(E)$ is proper. He also proved in \cite[Proposition 4.1]{Wickstead} that bounded orthomorphisms on normed lattices are automatically central operators. Since every positive operator on a Banach lattice is bounded, for a Banach lattice $E$ we have $\mathcal Z(E)=\orth(E)$.

\begin{theorem}\label{diagonal Riesz}
Let $T$ be a positive Riesz operator on an order complete Banach lattice $E$. Then
$\mathcal P_E(T)=\mathcal P_A(T)$.
\end{theorem}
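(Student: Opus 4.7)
The plan is first to reduce via \Cref{diagonal split} to showing the diagonal vanishes on the atomless part of $E$, and then to contradict the Riesz hypothesis through a disjoint-sequence argument.

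By \Cref{diagonal split} applied to $E = A \oplus C$,
\[
\mathcal P_E(T) - \mathcal P_A(T) = \mathcal P_C(P_C T P_C).
\]
Set $D := \mathcal P_C(P_C T P_C)$, a positive orthomorphism on the atomless Banach lattice $C$, hence a positive central operator by the discussion preceding the theorem. The operator $T$ splits into four positive blocks with respect to $E = A \oplus C$, so $0 \leq D \leq P_C T P_C \leq T$ in $\mathcal L_r(E)$, and the task is to show $D = 0$.

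Suppose for contradiction that $D \neq 0$. Pick $\lambda \in (0, \|D\|)$ and let $Q$ be the band projection in $C$ onto the carrier of the positive orthomorphism $(D - \lambda I_C)^+$. Then $Q \neq 0$, and since $(D - \lambda I_C)^+$ and $(D - \lambda I_C)^-$ are disjoint in the $f$-algebra $\orth(C)$, we have $(D - \lambda I_C)^- Q = 0$, so $DQ = (D - \lambda I_C)^+ + \lambda Q \geq \lambda Q$; combining with $D(I-Q) \geq 0$ yields $D \geq \lambda Q$ as operators on $C$, hence $\lambda Q \leq T$ on $E$. The band $B := Q(C)$ is a non-zero atomless band of $C$, so iterating disjoint splittings of a positive unit vector produces a norm-one pairwise disjoint positive sequence $(x_n) \subseteq B^+$. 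Then $T x_n \geq \lambda Q x_n = \lambda x_n$ gives $\|T x_n\| \geq \lambda$ for every $n$.

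The main obstacle is to extract from the Riesz hypothesis on $T$ a conclusion contradicting $\|T x_n\| \geq \lambda$ along this disjoint sequence. The intended property is that every positive Riesz operator on a Banach lattice sends norm bounded pairwise disjoint positive sequences to norm null sequences; an equivalent formulation is monotonicity of the essential spectral radius under positive domination, which would yield $\lambda = r_e(\lambda Q) \leq r_e(T) = 0$, since $Q$ is a non-trivial band projection onto an infinite-dimensional atomless band and therefore its image in the Calkin algebra is a non-trivial idempotent with $\sigma_e(Q) = \{0,1\}$. Either formulation is the technical heart of the argument and is the step where the full strength of the Riesz hypothesis on $T$ is essentially used.
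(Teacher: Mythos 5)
Your reduction is correct as far as it goes: the passage from $\mathcal P_E(T)\neq\mathcal P_A(T)$ to a non-zero band projection $Q$ onto an atomless band of $C$ with $\lambda Q\le T$ for some $\lambda>0$ is sound (the identity $D=(D-\lambda I_C)^{+}-(D-\lambda I_C)^{-}+\lambda I_C$ together with disjointness of carriers does give $DQ\ge\lambda Q$, and $0\le D\le P_CTP_C\le T$ follows from \eqref{diagonal_formula} and positivity of the off-diagonal blocks of $T$). The gap is exactly where you locate it, and it is not a routine one. The first property you propose to finish with is false: on $L^1[0,1]$ the rank-one (hence compact, hence Riesz) positive operator $Tf=\bigl(\int_0^1 f\bigr)\mathbf 1$ sends the pairwise disjoint normalized positive sequence $x_n=2^n\chi_{(2^{-n},\,2^{-n+1}]}$ to the constant sequence $\mathbf 1$, so positive Riesz operators need not annihilate disjoint bounded positive sequences in norm. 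In particular this property is not equivalent to a domination statement for the essential spectral radius, which holds trivially for rank-one operators. The second property is what you actually need, applied to $S=\lambda Q$ whose essential spectrum contains $\lambda$; but plain monotonicity $r_e(S)\le r_e(T)$ for $0\le S\le T$ is not an off-the-shelf fact. What is available is the quantitative Aliprantis--Burkinshaw theorem of de Pagter and Schep on measures of non-compactness under domination, which yields an inequality of the type $r_e(S)^3\le r_e(T)\,r(T)^2$ and would indeed close your argument when $r_e(T)=0$ --- but that is a substantial external theorem that has to be invoked and verified explicitly, not a reformulation of the (false) disjoint-sequence property. As it stands, the proof is incomplete at its decisive step.

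For contrast, the paper takes a different and more self-contained route: it uses the characterization of Riesz operators as asymptotically quasi-finite-rank operators to find finite-rank $F_n$ with $\|T^n-F_n\|<\epsilon^n$, kills $\mathcal P_C(|F_n|)$ via Schep's result for compact (moduli of finite-rank) operators, and then combines the norm-contractivity of $\mathcal P_E$ (Voigt), the supermultiplicativity $\mathcal P_C(AB)\ge\mathcal P_C(A)\mathcal P_C(B)$, and the $C(K)$-structure of the centre to obtain $\|\mathcal P_C(T)\|^n=\|(\mathcal P_C(T))^n\|\le\epsilon^n$, whence $\mathcal P_C(T)=0$. No domination theorem is needed there. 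If you wish to keep your approach, you must supply a proof or a precise reference for the domination step.
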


\begin{proof}
Since $T$ is a Riesz operator, by \cite[Theorem 3.12]{Dowson78}, the operator $T$ is an asymptotically quasi-finite-rank operator. This means that
$$\inf \{ \|T^n-F\|^{\frac{1}{n}}:\; F\in E\otimes E^{*}\} \to 0$$ as $n$ goes to infinity. Pick any $\epsilon>0$ and find $n_\epsilon$ such that
$$\inf\{ \|T^n-F\|^{\frac{1}{n}}:\; F\in E\otimes E^{*} \}<\epsilon$$ for all $n\geq n_\epsilon$.
Hence, for each $n\geq n_\epsilon$ there exists a finite rank operator $F_n$ such that
$\|T^n-F_n\|<\epsilon^n.$ By \cite[Theorem 4.14]{AA02}, the finite-rank operator $F_n$ has a compact modulus.
An application of \cite[Proposition 3.2]{DK14} then gives us that $\mathcal P_E(|F_n|)=\mathcal P_A(|F_n|)$, so that $\mathcal P_C(|F_n|)=0$.
The triangle inequality
$$0\leq T^n\leq |T^n-F_n|+|F_n|$$
and the fact that $\mathcal P_C$ is a lattice homomorphism yield that
$$0\leq \mathcal P_C (T^n)\leq \mathcal P_C(|T^n-F_n|).$$
Pick any $n\geq n_\epsilon$.
Since the norm and the regular norm of a central operator coincide, we have $\|\mathcal P_E (T^n-F_n)\|_r=\|\mathcal P_E (T^n-F_n)\|$.
Since the diagonal operator $\mathcal P_E$ is  a contraction with respect to the operator norm by \cite[Theorem 1.4]{Voigt88},
we have
\begin{align*}
|\mathcal P_C(T^n)\| & = \|\mathcal P_C(T^n)\|_r \leq \|\mathcal P_C(T^n-F_n)\|_r \leq \|\mathcal P_E(T^n-F_n)\|_r\\
& =\|\mathcal P_E(T^n-F_n)\|\leq \|T^n-F_n\|  <\epsilon^n.
\end{align*}
Since the diagonal operator $\mathcal P_C$ satisfies $\mathcal P_C(AB)\geq \mathcal P_C(A)\mathcal P_C(B)$ for positive operators $A$ and $B$, we have
$\mathcal P_C(T^n)\geq (\mathcal P_C(T))^n$, and so
$$ \|(\mathcal P_C(T))^n\| \leq \|\mathcal P_C(T^n)\|\leq \epsilon^n.$$
By \cite[Theorem 1.4]{Schep80}, the center $\mathcal Z(E)$ is an AM-space with a strong unit. Therefore, $\mathcal Z(E)$ is algebraically and lattice isometric to the Banach lattice algebra $C(K)$ of continuous functions on some compact Hausdorff space $K$. In particular, we have
$\|({\mathcal P}_C(T))^n\|=\|{\mathcal P}_C(T)\|^n$, from where it follows $\|{\mathcal P}_C(T)\|<\epsilon$. Since this holds for each $\epsilon>0$, we finally conclude that $\mathcal P_C(T)=0$.
\end{proof}

%
%



\end{document}